\documentclass[10pt]{amsart}

\usepackage{amsmath,amsfonts,latexsym,amssymb,mathrsfs,setspace,enumerate,color,cite,graphicx,epsf,qtree,amscd,fullpage}


\newtheorem{thm}{Theorem}
\newtheorem{lem}[thm]{Lemma}
\newtheorem{prop}[thm]{Proposition}
\newtheorem{cor}[thm]{Corollary}
\numberwithin{equation}{section}
\numberwithin{thm}{section}

\theoremstyle{definition}
\newtheorem{ex}[thm]{Example}
\newtheorem{conj}[thm]{Conjecture}

\newcommand{\rat}{\mathbb Q}
\newcommand{\real}{\mathbb R}

\newcommand{\alg}{\overline\rat}
\newcommand{\algt}{\alg^{\times}}

\newcommand{\intg}{\mathbb Z}
\newcommand{\nat}{\mathbb N}

\newcommand{\G}{\mathcal G}

\newcommand{\tors}{\mathrm{tors}}

\newcommand{\aaa}{{\bf A}}
\newcommand{\bbb}{{\bf B}}

\newcommand{\ttt}{\mathcal T}
\newcommand{\ppp}{\mathcal P}
\newcommand{\ooo}{\mathcal O}

\newcommand{\comment}[1]{}

\title[Optimal factorizations]{Optimal factorizations of rational numbers using factorization trees}

\author{Charles L. Samuels \and Tanner J. Strunk}

\address{Oklahoma City University, Department of Mathematics, 2501 N. Blackwelder, Oklahoma City, OK 73106, USA}
\email{clsamuels@okcu.edu, tjstrunk@my.okcu.edu}
\subjclass[2010]{11A51, 11G50, 11R09 (Primary); 05C05, 05C20, 68P05 (Secondary)}

\begin{document}

\begin{abstract}
	Let $m_t(\alpha)$ denote the $t$-metric Mahler measure of the algebraic number $\alpha$.
	Recent work of the first author established that the infimum in $m_t(\alpha)$ is attained by a single point $\bar\alpha = (\alpha_1,\ldots,\alpha_N)\in \alg^N$ 
	for all sufficiently large $t$.  Nevertheless, no efficient method for locating $\bar \alpha$ is known.  In this article, we define a new tree data structure, called a factorization tree, 
	which enables us to find $\bar\alpha$ when $\alpha\in \rat$.  We establish several basic properties of factorization trees, 
	and use these properties to locate $\bar\alpha$ in previously unknown cases.
\end{abstract}

\maketitle

\section{Introduction}

Suppose that $K$ is a number field and $v$ is a place of $K$ dividing the place $p$ of $\rat$.  Let $K_v$ and $\rat_p$ be their respective completions
so that $K_v$ is a finite extension of $\rat_p$.  We note the well-known fact that
\begin{equation*}
	\sum_{v\mid p} [K_v:\rat_p] = [K:\rat],
\end{equation*}
where the sum is taken over all places $v$ of $K$ dividing $p$.
Given $x\in K_v$, we define $\|x\|_v$ to be the unique extension of the $p$-adic absolute value on $\rat_p$ and set
\begin{equation} \label{NormalAbs}
	|x|_v = \|x\|_v ^{[K_v:\rat_p]/[K:\rat]}.
\end{equation}
If $\alpha\in K$, then $\alpha\in K_v$ for every place $v$, so we may define the {\it (logarithmic) Weil height} by
\begin{equation*}
	h(\alpha) = \sum_v \log^+ |\alpha|_v.
\end{equation*}
Due to our normalization of absolute values \eqref{NormalAbs}, this definition is independent of $K$, meaning that $h$ is well-defined
as a function on the multiplicative group $\algt$ of non-zero algebraic numbers.

It is well-known that $h(\alpha) = 0$ if and only if $\alpha$ is a root of unity, and it can easily be verified that $h(\alpha^n) = |n|\cdot h(\alpha)$ for all
integers $n$.  In particular, we see that $h(\alpha) = h(\alpha^{-1})$.
A theorem of Northcott \cite{Northcott} asserts that, given a positive real number $D$, there are only finitely many algebraic numbers $\alpha$ with $\deg\alpha\leq D$ and 
$h(\alpha)\leq D$.

The Weil height is closely connected to a famous 1933 problem of D.H. Lehmer \cite{Lehmer}.  The {\it (logarithmic) Mahler measure} of a non-zero algebraic number $\alpha$ is
defined by 
\begin{equation} \label{MahlerMeasure}
	m(\alpha) = [\rat(\alpha):\rat] \cdot h(\alpha).  
\end{equation}	
In attempting to construct large prime numbers, Lehmer came across the problem of determining whether there exists a sequence of
algebraic numbers $\{\alpha_n\}$, not roots of unity, such that $m(\alpha_n)$ tends to $0$ as $n\to\infty$.  This problem remains unresolved, although substantial evidence
suggests that no such sequence exists (see \cite{BDM, MossWeb, Schinzel, Smyth}, for instance).  This assertion is typically called Lehmer's conjecture.

\begin{conj}[Lehmer's Conjecture]
	There exists $c>0$ such that $m(\alpha) \geq c$ whenever $\alpha\in \algt$ is not a root of unity. 
\end{conj}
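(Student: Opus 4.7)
The statement above is Lehmer's Conjecture, formulated in 1933 and still open; any ``proof proposal'' I can offer is really a survey of the known approaches and the barriers they encounter, since a complete proof is beyond current techniques. The natural first step is to separate $\alpha$ into two cases according to whether its minimal polynomial $f$ over $\intg$ is reciprocal, meaning $x^{d}f(1/x)=\pm f(x)$ with $d=\deg f$. The non-reciprocal case was resolved by Smyth \cite{Smyth} with the explicit bound $m(\alpha)\geq \log\theta_{0}\approx 0.2811$, where $\theta_{0}$ is the real root of $x^{3}-x-1$; his argument constructs an auxiliary function in the disc algebra and exploits elementary inequalities for $m$ together with a sign analysis to force a contradiction when $m(\alpha)$ is too small. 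The hard case, and the one no one knows how to handle, is reciprocal $\alpha$.

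My plan for the reciprocal case would begin with the arithmetic-modular approach of Dobrowolski: for any prime $p$, the Galois conjugates of $\alpha^{p}$ and $\alpha$ are congruent modulo any prime of $\rat(\alpha)$ lying above $p$, and one compares the resultant of $f(x)$ with $f(x^{p})$ to derive the quantitative bound $m(\alpha)\gg (\log\log d/\log d)^{3}$, with $d=\deg\alpha$. To remove the dependence on $d$, one would want to replace the finite auxiliary-polynomial construction by an object whose efficacy does not decay with degree---for instance, a transcendence-style interpolation using infinitely many primes simultaneously, or an equidistribution input (as in the theorems of Bilu or Smyth--Zagier) that controls the density of conjugates on the unit circle. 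The main obstacle is structural: when $m(\alpha)$ is small, the conjugates of reciprocal $\alpha$ equidistribute almost perfectly along $|z|=1$, and every straightforward arithmetic invariant averages toward a trivial value, so the information needed to separate such $\alpha$ from a root of unity is squeezed out of every classical invariant as $d\to\infty$.

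Given these difficulties, the honest assessment is that a direct proof along classical lines is not currently in reach, and the present paper's approach---analyzing the refined invariants $m_{t}(\alpha)$ via factorization trees---should be read as one of several indirect strategies. If one can show that the optimizer $\bar\alpha=(\alpha_{1},\ldots,\alpha_{N})$ for $m_{t}(\alpha)$ carries additional combinatorial or arithmetic structure (for instance, bounds on $[\rat(\alpha_{i}):\rat]$, or recursive identities among the factors coming from the tree), then such structure might, in an appropriate limit, translate into information about $m(\alpha)$ itself. I would therefore treat the conjecture not as a target for immediate proof but as the motivating backdrop for the structural results on $\bar\alpha$ and on factorization trees that the paper is about to develop.
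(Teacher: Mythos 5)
You correctly recognized that this is Lehmer's Conjecture, an open problem that the paper states only as background motivation and does not (and cannot) prove; the paper itself says explicitly that ``this problem remains unresolved.'' Your decision to survey the known partial results (Smyth's resolution of the non-reciprocal case, Dobrowolski's degree-dependent lower bound) and then explain how the paper's study of $m_t(\alpha)$ and factorization trees relates to the conjecture as an indirect strategy is the appropriate response, and it is consistent with how the paper treats the statement.
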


Dobrowolski \cite{Dobrowolski} provided the best known lower bound on $m(\alpha)$ in terms of $\deg\alpha$, while Voutier \cite{Voutier}
later gave a  version of this result with an effective constant.
Nevertheless, only little progress has been made on Lehmer's conjecture for an arbitrary algebraic number $\alpha$.

Dubickas and Smyth \cite{DubSmyth, DubSmyth2} were the first to study a modified version of the Mahler measure that has the triangle inequality.
They defined the {\it metric Mahler measure} by
\begin{equation*}
	m_1(\alpha) = \inf\left\{ \sum_{n=1}^N m(\alpha_n): N\in\nat,\ \alpha_n\in \algt\ \alpha = \prod_{n=1}^N\alpha_n\right\},
\end{equation*}
so that the infimum is taken over all ways of writing $\alpha$ as a product of algebraic numbers.  It is easily verified that $m_1(\alpha\beta) \leq m_1(\alpha) + m_1(\beta)$,
and that $m_1$ is well-defined on $\algt/\algt_{\tors}$.  It is further noted in \cite{DubSmyth2} that $m_1(\alpha) = 0$ if and only if $\alpha$ is a torsion point of $\algt$ and that
$m_1(\alpha) = m_1(\alpha^{-1})$ for all $\alpha\in\algt$.  These facts ensure that $(\alpha,\beta) \mapsto m_1(\alpha\beta^{-1})$ defines a metric on $\algt/\algt_\tors$.  
This metric induces the discrete topology if and only if Lehmer's conjecture is true.

The first author \cite{SamuelsCollection, SamuelsParametrized, SamuelsMetric} further extended this definition leading to the {\it $t$-metric Mahler measure}
\begin{equation} \label{TMetricMahler}
	m_t(\alpha) = \inf\left\{ \left( \sum_{n=1}^N m(\alpha_n)^t\right)^{1/t}:N\in\nat,\ \alpha_n\in \algt\ \alpha = \prod_{n=1}^N\alpha_n\right\}.
\end{equation}
In this context, we examined the function $t\mapsto m_t(\alpha)$ for a fixed algebraic number $\alpha$.  For instance, we showed that this function is everywhere continuous 
and infinitely differentiable at all but finitely many points.  If $G$ is a multiplicatively written Abelian group, we write
\begin{equation*}
	G^\infty = \left\{ (\alpha_1,\alpha_2,\ldots): \alpha_n\in  G,\ \alpha_n=1\mbox{ for all but finitely many } n\right\}.
\end{equation*}
An element of $\aaa = (\alpha_1,\alpha_2,\ldots) \in (\algt)^\infty$ is called a {\it representation of $\alpha$} if 
$\alpha = \prod_{n=1}^\infty \alpha_n$ and $m(\alpha_n) \geq m(\alpha_{n+1})$ for all $n\geq 1$.  
For simplicity, we shall often refer to the finite $N$-tuple $(\alpha_1,\dots,\alpha_N)$ as a representation of $\alpha$ if 
$(\alpha_1,\ldots,\alpha_N,1,1,\ldots)$ is a representation of $\alpha$.  In this case, we may simply write the product $\alpha_1\cdots\alpha_N$
to denote the corresponding representation.  If $\aaa = (\alpha_1,\alpha_2,\ldots)$ is a representation of $\alpha$ satisfying
\begin{equation*}
	m_t(\alpha) = \left( \sum_{n=1}^\infty m\left(\alpha_n\right)^t\right)^{1/t}
\end{equation*}
for all sufficiently large $t$, then we say that $\aaa$ is an {\it optimal representation}.
The following theorem is a consequence of \cite[Theorem 2.2]{SamuelsMetric}.

\begin{thm} \label{FiniteExceptional}
	If $\alpha$ is a non-zero algebraic number then $\alpha$ has an optimal representation.
\end{thm}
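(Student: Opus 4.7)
The plan is to derive Theorem~\ref{FiniteExceptional} from Theorem~2.2 of \cite{SamuelsMetric}. The strategy has two phases: first, show that for each fixed $t \geq 1$ the infimum defining $m_t(\alpha)$ in \eqref{TMetricMahler} is attained by some finite-length representation $\aaa^{(t)}$; second, show that the family $\{\aaa^{(t)} : t \geq 1\}$ takes only finitely many distinct values, so that on some unbounded right end of $(0,\infty)$ a single representation wins.

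For the first phase, I would take a minimizing sequence $\aaa^{(k)} = (\alpha_1^{(k)}, \alpha_2^{(k)}, \ldots)$ of representations of $\alpha$ for this particular $t$. Each coordinate satisfies $m(\alpha_n^{(k)}) \leq m_t(\alpha) + 1$ for $k$ large, and since $m(\beta) = [\rat(\beta):\rat] h(\beta)$ controls both the degree and the Weil height of $\beta$, Northcott's theorem confines each $\alpha_n^{(k)}$ to a finite set. A diagonal argument then extracts a subsequence on which every coordinate stabilizes to a limit $\alpha_n^{(t)}$. To upgrade this to a \emph{finite}-length minimizer, one must bound the number of non-trivial coordinates uniformly; this is precisely where \cite[Theorem~2.2]{SamuelsMetric} is imported as a black box, giving a cap on $N$ for minimizing representations.

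For the second phase, each attaining representation $\aaa^{(t)}$ satisfies $m(\alpha_n^{(t)}) \leq m_t(\alpha) \leq m_1(\alpha)$ for all $n$, since $(\sum_n x_n^t)^{1/t}$ is non-increasing in $t$. Combined with the uniform bound on the number of non-trivial factors, Northcott's theorem again reduces the candidate pool to a finite list $\aaa_1, \ldots, \aaa_L$ that is independent of $t$. For any two distinct candidates $\aaa_i, \aaa_j$ in this list, the function
\begin{equation*}
    F_{ij}(t) = \sum_n m(\alpha_n^{(i)})^t - \sum_n m(\alpha_n^{(j)})^t
\end{equation*}
is an exponential polynomial in $t$ that is not identically zero, hence has only finitely many real zeros. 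Consequently the sign of $F_{ij}$ is constant on some unbounded interval $(T_{ij}, \infty)$, and choosing $T = \max_{i,j} T_{ij}$ guarantees that one fixed candidate beats all the others for every $t > T$; that candidate is the desired optimal representation.

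The main obstacle, I expect, is importing the bound on $N$ from \cite[Theorem~2.2]{SamuelsMetric}: without Lehmer's conjecture one cannot rule out minimizing representations with arbitrarily many factors whose Mahler measures cluster near $0$, so the structural input from prior work is essential. Once that bound is granted, the remainder is a compactness-plus-exponential-polynomial argument.
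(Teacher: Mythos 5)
The paper does not actually prove Theorem~\ref{FiniteExceptional}; it imports it wholesale as "a consequence of \cite[Theorem~2.2]{SamuelsMetric}." So there is no in-paper argument against which to check your sketch step by step, and in fact you also defer the decisive finiteness step to that same reference. Given that both sides lean on the same external result, the real question is whether the scaffolding you put around the citation is sound, and there I see a genuine error.

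The gap is in your Northcott step. You assert that a bound on $m(\beta) = [\rat(\beta):\rat]\,h(\beta)$ "controls both the degree and the Weil height," so Northcott confines each factor to a finite set. That is false: bounding the product $\deg(\beta)\cdot h(\beta)$ does not bound either factor separately. The standard counterexample is $\beta_n = 2^{1/n}$, for which $m(\beta_n) = n\cdot(\tfrac{1}{n}\log 2) = \log 2$ for every $n$, while $\deg(\beta_n) = n \to \infty$ and $h(\beta_n) \to 0$. So the set of algebraic numbers with Mahler measure below a fixed bound is infinite, and your diagonal/compactness extraction in Phase~1 and your "finite candidate pool" claim in Phase~2 both break down. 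This is not the Lehmer-type obstruction you flag (which concerns small measures accumulating at $0$); it bites even when all measures sit at a single positive value. The genuine content of \cite[Theorem~2.2]{SamuelsMetric}, which you are treating only as a bound on the number of factors $N$, almost certainly does more: it shows that the factors in an optimal (or near-optimal) representation can be taken from a fixed number field or with uniformly bounded degree, which is exactly what makes Northcott applicable. Without that structural input your argument does not close; with it, your Phase~2 exponential-polynomial sign argument is fine and would finish the job.
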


Optimal representations are important because they encode information about the arithmetic properties of $\alpha$.  
For instance, if $\alpha$ is a positive integer with prime factorization given by 
$\alpha = p_1p_2\cdots p_N$, the work of \cite{JankSamuels} asserts that $(p_1,p_2,\ldots,p_N)$ is an optimal representation of $\alpha$. 
Although specific optimal representations are known in some other special cases, current knowledge is extremely limited.   
Even the case where $\alpha\in \rat\setminus\intg$ is not well understood.
The proof of Theorem \ref{FiniteExceptional} does provide a method to search a list of candidates for an optimal
representation, but it alone gives little information on how to determine a sufficiently small list of candidates.
 On the other hand, a result of Jankauskas and the first author \cite{JankSamuels} provides a crucial improvement to Theorem \ref{FiniteExceptional} when $\alpha\in \rat$.

Suppose $\alpha = a/b$ is such that $a,b > 0$ and $\gcd(a,b) = 1$.  These assumptions on $\alpha$ may be made without loss of generality
and will be made throughout this article.  In this situation, we know that $m(\alpha) = \log \max\{a,b\}$.  An element
\begin{equation} \label{BasicFact}
	\aaa = \left( \frac{a_1}{b_1},\frac{a_2}{b_2},\ldots\right) \in (\rat^\times)^\infty
\end{equation}
is called a {\it factorization of $\alpha$} if the following four conditions hold.
\begin{enumerate}[(i)]
	\item $a_n,b_n \in \nat$ for all $n\in \nat$.
	\item $\alpha = \prod_{n=1}^\infty \frac{a_n}{b_n}$
	\item\label{Decreasing} $\max\{a_n,b_n\} \geq \max\{a_{n+1},b_{n+1}\}$ for all $n\in \nat$
	\item $\gcd(a_m,b_n) = 1$ for all $m,n\in\nat$.
\end{enumerate}
Of course, every factorization of $\alpha$ is a representation of $\alpha$.
By combining \cite[Theorem 1.2]{JankSamuels} and \cite[Theorem 2.2]{SamuelsParametrized},
we obtain an improvement to Theorem \ref{FiniteExceptional} when $\alpha\in \rat$.

\begin{thm}\label{Rationals2}
	If $\alpha$ is a positive rational number then $\alpha$ has an optimal factorization.
\end{thm}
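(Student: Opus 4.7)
The plan is to begin with an optimal representation of $\alpha$, as supplied by Theorem \ref{FiniteExceptional}, and successively refine it so that it meets each of the four defining conditions of a factorization. Since Theorem \ref{Rationals2} is asserted to follow from combining \cite[Theorem 1.2]{JankSamuels} and \cite[Theorem 2.2]{SamuelsParametrized}, the goal is to isolate what each of these results contributes.

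First, I would invoke \cite[Theorem 1.2]{JankSamuels}, which I expect to assert that an optimal representation of a positive rational number $\alpha$ may be taken to consist entirely of positive rational factors $\alpha_n \in \rat^\times$. Writing each $\alpha_n = a_n/b_n$ in lowest terms with $a_n,b_n\in\nat$, conditions (i) and (ii) in the definition of a factorization become immediate. Moreover, because $m(a_n/b_n) = \log\max\{a_n,b_n\}$ whenever $\gcd(a_n,b_n) = 1$, the monotonicity $m(\alpha_n) \geq m(\alpha_{n+1})$ in the definition of a representation translates directly into condition (iii). So after this step, only the cross-coprimality condition (iv) remains to be established.

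The main obstacle is condition (iv): forcing $\gcd(a_m,b_n) = 1$ for all $m,n\in\nat$, including the case $m \neq n$. If this fails for some pair $(m,n)$ with common factor $d > 1$, the natural move is to strip $d$ from the numerator of $\alpha_m$ and from the denominator of $\alpha_n$ and redistribute: replace $\alpha_m$ by $\alpha_m/d$ and $\alpha_n$ by $d\alpha_n$, then re-sort by Mahler measure. A direct computation shows this operation does not increase $\sum_n m(\alpha_n)^t$ for any fixed $t \geq 1$, and in fact strictly decreases it unless the affected Mahler measures collapse in a very restricted way. The subtle point is that an optimal representation is required to be optimal simultaneously for all sufficiently large $t$, and it is here that I expect \cite[Theorem 2.2]{SamuelsParametrized} to enter: its parametric control over representations as $t$ varies should allow one to conclude that, after finitely many such swaps, the resulting tuple is still optimal and now satisfies (iv).

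Finally, after carrying out all possible cancellations and re-sorting once more so that the maxima $\max\{a_n,b_n\}$ are weakly decreasing, the resulting tuple is still an optimal representation by construction and satisfies (i)--(iv), hence is an optimal factorization of $\alpha$. I anticipate that the bulk of the work, and the place where the two cited results must be carefully reconciled, lies in the uniform-in-$t$ aspect of the reduction step for condition (iv), since without that, one only obtains optimality at a single value of $t$.
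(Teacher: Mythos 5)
The paper offers no proof of this theorem at all: it states Theorem~\ref{Rationals2} as an immediate consequence of combining \cite[Theorem 1.2]{JankSamuels} with \cite[Theorem 2.2]{SamuelsParametrized}, with no intermediate argument. So there is no internal proof to match your reconstruction against, and your attempt to fill in the missing steps is a reasonable exercise, but I want to flag where I think your attributions go astray.

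Your main worry --- that the cancellation swap for condition (iv) needs parametric control from \cite[Theorem 2.2]{SamuelsParametrized} to remain optimal uniformly in $t$ --- is misplaced. If $d=\gcd(a_m,b_n)>1$ and stripping $d$ from $a_m$ and $b_n$ strictly lowers either Mahler measure, then $m((a_m/d)/b_m)^t + m(a_n/(b_n/d))^t < m(a_m/b_m)^t + m(a_n/b_n)^t$ for \emph{every} $t>0$, since $x\mapsto x^t$ is increasing. Thus an optimal representation with rational entries simply cannot contain such a reducible pair, and the residual swaps (those that leave all measures unchanged) trivially preserve optimality for all $t$ at once. The swap step is already uniform in $t$ with no outside help. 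The more natural reading of the two citations is that \cite[Theorem 1.2]{JankSamuels} is the result that, for rational $\alpha$ and each fixed $t$, the infimum defining $m_t(\alpha)$ may be computed over factorizations in the sense of (i)--(iv) rather than over arbitrary tuples of algebraic numbers (i.e., it subsumes both your reduction-to-rationals step and your coprimality swap), while \cite[Theorem 2.2]{SamuelsParametrized} supplies the stabilization in $t$ --- the existence of a single factorization achieving the infimum for all sufficiently large $t$ --- playing the same role in the rational case that \cite[Theorem 2.2]{SamuelsMetric} plays for Theorem~\ref{FiniteExceptional} in general. You have effectively inverted the roles of the two cited results, and you also route your argument through Theorem~\ref{FiniteExceptional} (and hence through \cite{SamuelsMetric}) when the paper's citation pattern suggests Theorem~\ref{Rationals2} is meant to follow from the two rational-specific results directly, without passing through the general case.
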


As we remarked following the statement of Theorem \ref{FiniteExceptional}, the work of \cite{SamuelsMetric} provides a method to 
search a list of candidates for an optimal factorization.
Specifically, if $\mathcal A$ is known to contain at least one optimal factorization of $\alpha$, then we define the sequence of sets $\{\mathcal A_n\}$ as follows.
\begin{enumerate}[(I)]
	\item Let $\mathcal A_0 = \mathcal A$
	\item If $\mathcal A_n$ is given let $\mu_{n+1} = \min\{m(\alpha_{n+1}): (\alpha_1,\alpha_2,\ldots)\in \mathcal A_n\}$ and define
		\begin{equation*}
			\mathcal A_{n+1} = \left\{ (\alpha_1,\alpha_2,\ldots)\in\mathcal A_n: m(\alpha_{n+1}) = \mu_{n+1}\right\}.
		\end{equation*}
\end{enumerate}
It is shown in \cite{SamuelsMetric} that there exists $n$ such that  $\mathcal A_n$ contains only optimal factorizations of $\alpha$.

In view of Theorem \ref{Rationals2}, we could take $\mathcal A$ to be the set of all factorizations of $\alpha$ and we are guaranteed to eventually locate at least one 
optimal factorization.  But then $\#\mathcal A \gg \exp(N)$, where $N$ is the number of prime factors in $\alpha$, so the above method is highly time 
consuming when $\alpha$ has many prime factors.  

Our goal is to provide a slimmer list of factorizations to use as $\mathcal A$ in the above algorithm.
Although our method does not improve upon $\exp(N)$ in all cases, it does so in many special cases, whereas the above method always has $\#\mathcal A \gg \exp(N)$.
In the process, we will uncover certain tree data structures, called {\it factorization trees}, which we believe are of independent interest.

In section \ref{NewResults} we state the primary results of this article.  As understanding these statements requires some preparation, we use subsections \ref{DDS} and
\ref{FT} to formally introduce the notion of factorization tree mentioned above and to provide the appropriate graph theory background.  

There are two types of factorization trees that are particularly relevant to our study -- the {\it primitive factorization tree} and the {\it optimal factorization tree}.  These
are introduced in subsections \ref{PFT} and \ref{OFT}, respectively.  In each case, we study examples called the {\it maximal primitive factorization tree for $\alpha$} 
(denoted $\ppp_\alpha$) and the {\it canonical optimal factorization tree for $\alpha$} (denoted $\ooo_\alpha$) which are the subject of two of our main results, 
Theorems \ref{PrimitiveComplete} and \ref{OptimalOptimal}.  The former asserts several extremal properties of $\ppp_\alpha$ while the latter shows that both 
$\ppp_\alpha$ and $\ooo_\alpha$ may be used to locate optimal factorizations.

Each factorization tree gives rise to a natural quotient graph called a {\it measure class graph} which we discuss in subsection \ref{MCG}.  We state our final
main result, Theorem \ref{BinTree}, in that subsection as well.  

The proofs of all results are presented in section \ref{Proofs}.
Finally, we shall provide examples where our methods may be used to compute new optimal factorizations.

\section{New Results} \label{NewResults}

\subsection{Digraph Data Structures}\label{DDS}

Suppose that $G$ is a digraph with vertices $V(G)$ and edges $E(G)$.
Given any set $X$ and map $\nu: V(G)\to X$, the ordered pair $(G,\nu)$ is called a {\it digraph data structure for $X$}.
If $\G = (G,\nu)$ is such an object then $G$ is called the {\it skeleton of $\G$} and $\nu$ is called the {\it content map for $\G$}.
We shall sometimes write $V(\G) = V(G)$ or $E(\G) = E(G)$ for simplicity, but we emphasize that these sets do not depend on $\nu$.

Suppose that $\G_1 = (G_1,\nu_1)$ and $\G_2= (G_2,\nu_2)$ are digraph data structures for sets $X_1$ and $X_2$, respectively.  
Also assume that $f:X_1\to X_2$ is any map.  A map $\sigma:V(G_1)\to V(G_2)$ is a called an {\it $f$-homomorphism from $\G_1$ to $\G_2$} if the following hold.
\begin{enumerate}[(i)]
	\item\label{EdgePreserving} If $(g,h)\in E(G_1)$ then $(\sigma(g),\sigma(h))\in E(G_2)$.
	\item\label{ContentPreserving} If $r\in V(G_1)$ then $f(\nu_1(r)) = \nu_2(\sigma(r))$.
\end{enumerate}
We note that condition \eqref{ContentPreserving} asserts that we have a commutative diagram

$$\begin{CD}
	V(G_1)   @>\nu_1>>  X_1\\
	@VV\sigma V @VVfV\\
	V(G_2)  @>\nu_2>>   X_2
\end{CD}$$
\smallskip

If $\sigma$ is an $f$-homomorphism, we say that $\sigma$ is {\it faithful} if, for every $g_2,h_2\in \sigma(V(G_1))$ having $(g_2,h_2)\in E(G_2)$, there exists $(g_1,h_1)\in E(G_1)$ such that
$(\sigma(g_1),\sigma(h_1)) = (g_2,h_2)$.  If $\sigma$ satisfies the stronger condition
\begin{equation*}
	 (g,h)\in E(G_1) \mbox{ if and only if } (\sigma(g),\sigma(h))\in E(G_2)
\end{equation*}
then we say that $\sigma$ is {\it edge-preserving}.  In general, faithful does not imply edge-preserving, however if $\sigma$ is injective, it is easily seen
that the two conditions are equivalent.

If $\sigma$ is an $f$-homomorphism which is both bijective and edge-preserving, then we say that $\sigma$ is an {\it $f$-isomorphism}.
From our earlier remarks, an $f$-homomorphism $\sigma$ is an $f$-isomorphism if and only if it is both bijective and faithful.
In the special case where $X_1=X_2$ and $f$ is the identity map, $\sigma$ is simply called a {\it homomorphism} or {\it isomorphism}, respectively.
In the latter case, we write $\G_1 \cong \G_2$ and note that $\cong$ is certainly an equivalence relation on the set of all digraph data structures for $X_1$.

If $T$ is a rooted tree then $T$ is an example of a digraph and the resulting digraph data structure $\ttt = (T,\nu)$ is called
a {\it tree data structure for $X$}.  In this case, let $V^*(T)$ denote the set of all non-root vertices of $T$.  The {\it parenting map for $T$ (or $\ttt$)}
is the map $\phi:V^*(T) \to V(T)$ such that $\phi(s)$ is the parent vertex of $s$.  Therefore, $(r,s)$ is an edge of $T$ if and only if $s\in V^*(T)$ and $\phi(s) = r$.   
A vertex of $T$ which has no children is called a {\it leaf vertex}.

Our discussion of $f$-homomorphisms simplifies somewhat when considering tree data structures

\begin{thm} \label{TreeHM}
	Assume that $\ttt_1 = (T_1,\nu_1)$ and $\ttt_2 = (T_2,\nu_2)$ are tree data structures for some sets $X_1$ and $X_2$, respectively.  Let $f:X_1\to X_2$ be any map
 	and suppose that  $\phi_1$ and $\phi_2$ are the parenting maps for $\ttt_1$ and $\ttt_2$, respectively.
	A map $\sigma:V(T_1)\to V(T_2)$ is an $f$-homomorphism from $\ttt_1$ to $\ttt_2$ if and only if the following conditions hold.
	\begin{enumerate}[(a)]
		\item\label{ParentPreserving} If $r\in V^*(T_1)$ then $\sigma(\phi_1(r)) = \phi_2(\sigma(r))$
		\item\label{ContentPreserving2} If $r\in V(T_1)$ then $f(\nu_1(r)) = \nu_2(\sigma(r))$.
	\end{enumerate}
	Moreover, if $\sigma$ is an injective $f$-homomorphism then $\sigma$ is edge-preserving. 
	In particular, $\sigma$ is an $f$-isomorphism if and only if $\sigma$ is a bijective $f$-homomorphism.
\end{thm}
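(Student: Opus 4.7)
The plan is to exploit the fact that in a rooted tree, the edge relation and the parenting map encode the same data: $(g,h) \in E(T)$ precisely when $h \in V^*(T)$ with $\phi(h) = g$. Once this is acknowledged, the asserted equivalence becomes a mechanical translation between condition \eqref{EdgePreserving} in the original definition of an $f$-homomorphism and condition \eqref{ParentPreserving} in the theorem statement.

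For the forward implication, I would assume $\sigma$ is an $f$-homomorphism. Condition \eqref{ContentPreserving2} is a restatement of \eqref{ContentPreserving}. To verify \eqref{ParentPreserving}, take $r \in V^*(T_1)$; since $(\phi_1(r),r) \in E(T_1)$, condition \eqref{EdgePreserving} supplies $(\sigma(\phi_1(r)),\sigma(r)) \in E(T_2)$, which unpacks to $\sigma(r) \in V^*(T_2)$ and $\phi_2(\sigma(r)) = \sigma(\phi_1(r))$. Conversely, assume \eqref{ParentPreserving} and \eqref{ContentPreserving2}. Given any $(g,h) \in E(T_1)$, we have $h \in V^*(T_1)$ with $\phi_1(h) = g$, so \eqref{ParentPreserving} gives $\phi_2(\sigma(h)) = \sigma(g)$, which means $(\sigma(g),\sigma(h)) \in E(T_2)$.

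For the edge-preserving claim under injectivity, only the reverse implication requires work. I would first establish the lemma that for any $v \in V(T_1)$, the image $\sigma(v)$ is a descendant of $\sigma(r_1)$ in $T_2$, where $r_1$ denotes the root of $T_1$; this is proved by iterating \eqref{ParentPreserving} along the unique descent path from $r_1$ to $v$ in $T_1$, producing a descent path from $\sigma(r_1)$ to $\sigma(v)$. Now suppose $(\sigma(g),\sigma(h)) \in E(T_2)$. If $h \in V^*(T_1)$, then \eqref{ParentPreserving} yields $\sigma(\phi_1(h)) = \phi_2(\sigma(h)) = \sigma(g)$, and injectivity forces $\phi_1(h) = g$, giving $(g,h) \in E(T_1)$.

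The main obstacle is ruling out the remaining possibility that $h = r_1$. Here $\sigma(g)$ would be the parent, hence a strict ancestor, of $\sigma(h) = \sigma(r_1)$ in $T_2$, while the descendant lemma asserts $\sigma(g)$ is also a descendant of $\sigma(r_1)$; a rooted tree cannot contain two distinct vertices each an ancestor of the other, so $\sigma(g) = \sigma(h)$, and then injectivity gives $g = h$, contradicting the fact that no vertex is its own parent. This eliminates the rogue case and completes the proof that $\sigma$ is edge-preserving. The concluding biconditional is then immediate: a bijective $f$-homomorphism is in particular injective, hence edge-preserving, hence an $f$-isomorphism, and the reverse direction is just the definition.
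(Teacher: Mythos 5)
Your proof is correct, and for the first part (the equivalence of the two axiom sets) and the closing biconditional it matches the paper's argument exactly. The notable divergence is in the edge-preserving claim. The paper's proof reads: suppose $(\sigma(r),\sigma(s))\in E(T_2)$; then property (a) gives $\sigma(r)=\phi_2(\sigma(s))=\sigma(\phi_1(s))$, and injectivity yields $\phi_1(s)=r$. But applying (a) to $s$ requires $s\in V^*(T_1)$, i.e.\ $s$ must not be the root of $T_1$, and the paper simply does not address this possibility. You noticed the issue and closed it with your descendant lemma: iterating (a) along the unique ascent from any $v$ to the root $r_1$ shows $\phi_2^k(\sigma(v))=\sigma(r_1)$ for the appropriate $k$, so every $\sigma(v)$ sits below $\sigma(r_1)$ in $T_2$; hence if $s$ were the root, $\sigma(r)$ would be simultaneously a strict ancestor (as parent) and a descendant of $\sigma(r_1)$, impossible in a rooted tree. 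This patches a tacit omission in the paper and makes your argument strictly more complete; otherwise the two proofs are the same.
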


\subsection{Factorization Trees} \label{FT}

Suppose $\alpha = a/b\in \rat$ is such that $a,b > 0$ and $\gcd(a,b) = 1$.
If $p$ is a prime dividing $a$ or $b$ then we shall say that $p$ {\it divides the numerator} or  {\it divides the denominator} of $\alpha$, respectively.
Now let $p_1,p_2,\ldots, p_N$ be the not necessarily distinct primes dividing either $a$ or $b$ and assume that
\begin{equation*} \label{AlphaPrimes}
	p_1 \geq p_2 \geq \cdots \geq p_N.
\end{equation*}
Also let
\begin{equation*}
	\gamma(i) = \begin{cases} 1 & \mathrm{if}\  p_i \mid a \\ -1 & \mathrm{if}\ p_i \mid b. \end{cases}
\end{equation*}
We define a finite sequence of rational numbers $\alpha_n$, for $1\leq n\leq N$, by
\begin{equation*} \label{Alphan}
	\alpha_n = \prod_{i=1}^n p_i^{\gamma(i)}.
\end{equation*}
It is clear from the definition that $\alpha_N = \alpha$, and for completeness, we also define $\alpha_0 = 1$.

Suppose that
\begin{equation*}
	\aaa =  \left( \frac{a_1}{b_1},\frac{a_2}{b_2},\ldots\right)\quad\mathrm{and}\quad \bbb = \left( \frac{c_1}{d_1},\frac{c_2}{d_2},\ldots\right)
\end{equation*}
are factorizations of $\alpha_n$ and $\alpha_{n+1}$, respectively.  Assuming that $p_{n+1}$ divides $c_k$, where $k\in \nat$,
we say that $\aaa$ is a {\it direct subfactorization} of $\bbb$ if 
\begin{equation*}
	d_i = b_i \quad\mathrm{and} \quad c_i = \begin{cases} a_i & \mathrm{if}\ i\ne k \\ a_ip_{n+1} & \mathrm{if}\ i = k. \end{cases}
\end{equation*}
for all $i\in \nat$.   We use an analogous definition when $p_{n+1}$ divides $d_k$.  In either case, we write $\aaa < \bbb$.  

\begin{ex}\label{30/7Basic}
Consider $\alpha = \frac{30}{7}$. Here, $\frac{5}{7}\cdot \frac{3}{1}$ is a direct subfactorization of $\frac{5}{7}\cdot \frac{3}{1}\cdot \frac{2}{1}$.
Indeed, the former is a factorization of $\alpha_3$ while the latter is a factorization of $\alpha_4$, and all numerators and denominators are equal except the 
one containing the fourth largest prime $2$.  For similar reasons, $\frac{5}{7}\cdot \frac{3}{1}$ is a direct subfactorization of $\frac{10}{7}\cdot \frac{3}{1}$. 

On the other hand, $\frac{3}{7}\cdot \frac{5}{1}$ is not a direct subfactorization of $\frac{5}{7}\cdot \frac{3}{1}\cdot \frac{2}{1}$ since these factorizations differ at two distinct entries.
Also, $\frac{2}{7}\cdot \frac{5}{1}$ is not a direct subfactorization of $\frac{2}{7}\cdot \frac{5}{1}\cdot \frac{3}{1}$ since the former is not a factorization of $\alpha_n$ for any $n$.
\end{ex}

If $\aaa$ is a direct subfactorization of $\bbb$, we emphasize the implicit assumption that $\aaa$ and $\bbb$ are factorizations of $\alpha_n$ and $\alpha_{n+1}$, respectively,
for some $0\leq n < N$.  In particular, this definition depends on $\alpha$ even though we have suppressed this dependency in our notation.  
This will be common practice throughout this article.
	
To generalize our definition of direct subfactorization, assume that 
\begin{equation*}
	\aaa =  \left( \frac{a_1}{b_1},\frac{a_2}{b_2},\ldots\right)\quad\mathrm{and}\quad \bbb = \left( \frac{c_1}{d_1},\frac{c_2}{d_2},\ldots\right)
\end{equation*}
are factorizations of $\alpha_n$ and $\alpha_{m}$, respectively, where $n < m$.  We say that $\aaa$ is a {\it subfactorization} of $\bbb$ if
there exist factorizations $\aaa_{n+1},\aaa_{n+2},\ldots,\aaa_{m-1}$ of $\alpha_{n+1},\alpha_{n+2},\ldots,\alpha_{m-1}$, respectively, such that 
\begin{equation} \label{GeneralSub}
	\aaa < \aaa_{n+1} < \aaa_{n+2} < \cdots < \aaa_{m-1} < \bbb.
\end{equation}
The following observation is clear from the definitions.

\begin{prop} \label{Subfact}
	Suppose $\alpha$ is a rational number, $\aaa$ is a factorization of $\alpha_m$, and $\bbb$ is a factorization of $\alpha_n$.   Then
	$\aaa$ is a direct subfactorization of $\bbb$ if and only if $\aaa$ is a subfactorization of $\bbb$ and $n = m+1$.
\end{prop}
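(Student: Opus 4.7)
The plan is to prove both implications by directly unwinding the definitions; the proposition amounts to the observation that when the indices $m$ and $n$ differ by exactly one, the chain \eqref{GeneralSub} that defines a general subfactorization collapses to a single edge. I would open by carefully matching variables: in the definition of subfactorization the factorization of the smaller $\alpha$-index is written on the left and the factorization of the larger index on the right, so in the notation of the proposition the correct identifications are $\aaa$ at $\alpha_m$ and $\bbb$ at $\alpha_n$ with $m<n$, which is consistent with $n=m+1$.

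For the forward direction, I would assume $\aaa$ is a direct subfactorization of $\bbb$. By the very definition of direct subfactorization, $\aaa$ and $\bbb$ must be factorizations of consecutive $\alpha$-stages, say $\alpha_k$ and $\alpha_{k+1}$. Since by hypothesis $\aaa$ is a factorization of $\alpha_m$ and $\bbb$ is a factorization of $\alpha_n$, uniqueness of the stage (the prime $p_{n+1}$ that was multiplied in is fixed) forces $k=m$ and $n=m+1$. It then remains to exhibit $\aaa$ as a subfactorization of $\bbb$ in the sense of \eqref{GeneralSub}; I would apply the definition with an \emph{empty} list of intermediate factorizations $\aaa_{m+1},\ldots,\aaa_{n-1}$, which is legitimate precisely because $n-1<m+1$. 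The chain then degenerates to the single relation $\aaa<\bbb$, which holds by assumption.

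For the reverse direction, I would assume $\aaa$ is a subfactorization of $\bbb$ with $n=m+1$. By definition, there exist factorizations $\aaa_{m+1},\ldots,\aaa_{n-1}$ of $\alpha_{m+1},\ldots,\alpha_{n-1}$ forming the chain $\aaa<\aaa_{m+1}<\cdots<\aaa_{n-1}<\bbb$. Because $n=m+1$, this index set is empty, so the chain reduces to $\aaa<\bbb$, which is exactly the statement that $\aaa$ is a direct subfactorization of $\bbb$. There is no substantive obstacle in this argument; the only subtlety is bookkeeping the index conventions so that the ``empty chain'' case is handled correctly, and confirming that the definition of subfactorization does not implicitly demand at least one intermediate term.
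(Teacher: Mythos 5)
Your proposal is correct and takes essentially the same approach the paper intends: the paper explicitly declines to give a proof, stating only that the claim ``follows directly from the definition of subfactorization and direct subfactorization,'' and your argument is exactly that direct unwinding, with the empty-chain case handled carefully. No issues.
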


In view of Proposition \ref{Subfact}, there is no ambiguity in writing $\aaa < \bbb$ whenever $\aaa$ is a subfactorization of $\bbb$.  It is straightforward to verify that 
$<$ defines a strict partial ordering on
\begin{equation*}
	\mathfrak F_\alpha = \left\{\aaa: \aaa\mbox{ is a factorization of } \alpha_n \mbox{ for some } 0\leq n\leq N \right\}
\end{equation*}
and that $\mathfrak F_\alpha$ has a unique minimal element, namely $(1,1,\ldots)$.   Moreover, an element $\aaa\in \mathfrak F_\alpha$ is  maximal if and only
if $\aaa$ is a factorization of $\alpha$.

Suppose that $\ttt = (T,\nu)$ is a tree data structure for $\mathfrak F_\alpha$ with parenting map $\phi$.  
$\ttt$ is called a {\it factorization tree for $\alpha$} if the following conditions hold.
\begin{enumerate}[(i)]
	\item\label{RootVertex} If $r$ is the root vertex of $T$ then $\nu(r) = (1,1,\ldots)$.
	\item\label{HasKids} If $n < N$ and $r$ is a vertex of $T$ such that $\nu(r)$ is a factorization of $\alpha_n$, then $r$ has at least one child.
	\item\label{UniqueKids} If $r$ and $s$ are vertices of $T$ such that $\nu(r) = \nu(s)$ and $\phi(r) = \phi(s)$ then $r = s$.
	\item\label{ParentRelations} If $r$ is a non-root vertex of $T$ then $\nu(\phi(r))$ is a direct subfactorization of $\nu(r)$.
\end{enumerate}

For clarification purposes, \eqref{UniqueKids} asserts that two vertices $r$ and $s$ containing the same factorization $\nu(r) = \nu(s)$ and having the same parent $\phi(r) = \phi(s)$ must
have $r=s$.  However, neither $\nu$ nor $\phi$ is an injection in general.  Certainly $\phi$ is an injection if and only if every vertex has at most one child.  Later (see Theorem \ref{Injection}),
we shall provide a sufficient condition for $\nu$ to be an injection.  

If $r$ is a vertex of $T$ such that $\nu(r)$ is a factorization of $\alpha$, then it follows from \eqref{ParentRelations} that $r$ has no children.  
Indeed, if $s$ is a child of $r$ then \eqref{ParentRelations} implies that $\nu(r) < \nu(s)$ contradicting the fact $\nu(r)$ is maximal with respect to the subfactorization relation.
Combining this with \eqref{HasKids}, we observe that the leaf vertices of $T$ are precisely those vertices $r$ such that $\nu(r)$ is a factorization of $\alpha$.
We also observe that $V(T)$ is a finite set.

It is not difficult to check that every rational number has a factorization tree.  We note the following example.

\begin{ex} \label{30/7}  The following is a factorization tree for $30/7$.

\bigskip
\Tree[.\fbox{$1$} 
		[.\fbox{$\frac{1}{7}$} 
			[.\fbox{$\frac{5}{7}$} 
				[.\fbox{$\frac{5}{7}\cdot \frac{3}{1}$}
					\fbox{$\frac{5}{7}\cdot\frac{3}{1}\cdot\frac{2}{1}$}
				]
			]
			[.\fbox{$\frac{1}{7}\cdot\frac{5}{1}$}
				[.\fbox{$\frac{3}{7}\cdot\frac{5}{1}$}
					\fbox{$\frac{6}{7}\cdot\frac{5}{1}$}
					\fbox{$\frac{3}{7}\cdot\frac{5}{1}\cdot \frac{2}{1}$}
				]
				[.\fbox{$\frac{1}{7}\cdot\frac{5}{1}\cdot\frac{3}{1}$}
					\fbox{$\frac{2}{7}\cdot\frac{5}{1}\cdot\frac{3}{1}$}
					\fbox{$\frac{1}{7}\cdot\frac{5}{1}\cdot\frac{3}{1}\cdot \frac{2}{1}$}
				]
			]
		]   
	]
	
\end{ex}

Our next theorem exhibits the strength of our definition of homomorphism when applied to factorization trees.

\begin{thm} \label{UniqueHMThm}
	Assume that $\ttt_1$ and $\ttt_2$ are factorization trees for $\alpha$.
	If $\sigma$ is a homomorphism from $\ttt_1$ to $\ttt_2$ then the following hold.
	\begin{enumerate}[(i)]
		\item\label{AllInjective} $\sigma$ is an injective edge-preserving homomorphism.
		\item\label{UniqueHM} If $\tau$ is another homomorphism from $\ttt_1$ to $\ttt_2$ then $\sigma = \tau$.
	\end{enumerate}
\end{thm}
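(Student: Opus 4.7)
The strategy is to induct on depth in $T_1$, where depth is measured from the root. The pivotal preliminary observation is that axiom \eqref{ParentRelations} of the factorization-tree definition forces the depth of a vertex to equal the index $n$ for which its content is a factorization of $\alpha_n$. In particular, since $(1,1,\ldots)$ has no direct subfactorization ``below'' it (direct subfactorizations move from factorizations of $\alpha_n$ to factorizations of $\alpha_{n+1}$), the root is the unique vertex in either tree whose content is $(1,1,\ldots)$. Combining this with the content-preserving condition \eqref{ContentPreserving2} of Theorem \ref{TreeHM}, any homomorphism $\sigma$ must send the root of $T_1$ to the root of $T_2$, and more generally must preserve depth.

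For part \eqref{AllInjective}, I would show injectivity by induction on depth. Suppose $\sigma(r) = \sigma(s)$ with $r,s$ at depth $n$. Content-preservation yields $\nu_1(r) = \nu_2(\sigma(r)) = \nu_2(\sigma(s)) = \nu_1(s)$, and parent-preservation gives $\sigma(\phi_1(r)) = \phi_2(\sigma(r)) = \phi_2(\sigma(s)) = \sigma(\phi_1(s))$. When $n=0$ both vertices are the root and we are done; when $n > 0$, the inductive hypothesis at depth $n-1$ produces $\phi_1(r) = \phi_1(s)$, and then axiom \eqref{UniqueKids} of the factorization-tree definition forces $r = s$. The edge-preserving conclusion is then immediate from the final sentence of Theorem \ref{TreeHM}.

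For part \eqref{UniqueHM}, I would again induct on depth, showing $\sigma(r) = \tau(r)$ for every $r \in V(T_1)$. The base case is the root-to-root observation above. For the inductive step with $r$ at depth $n+1$, parent-preservation together with the inductive hypothesis at depth $n$ gives
\[
	\phi_2(\sigma(r)) = \sigma(\phi_1(r)) = \tau(\phi_1(r)) = \phi_2(\tau(r)),
\]
while content-preservation gives $\nu_2(\sigma(r)) = \nu_1(r) = \nu_2(\tau(r))$. Thus $\sigma(r)$ and $\tau(r)$ are two children of a common vertex of $T_2$ with identical content, so axiom \eqref{UniqueKids} applied inside $\ttt_2$ forces $\sigma(r) = \tau(r)$.

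The step that most needs care is the preliminary claim that depth is controlled by content, since both inductions rest on it. Once the dictionary between depth in $T_i$ and the index $n$ with $\nu_i(r) \in$ (factorizations of $\alpha_n$) is firmly in hand, each remaining step is a short application of axiom \eqref{UniqueKids}; the argument is uniform in the sense that exactly the same ``common parent, common content'' trick handles both injectivity and uniqueness.
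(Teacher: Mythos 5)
Your proof is correct and uses essentially the same ingredients as the paper's: the root is pinned down by its content, content determines depth, and axiom \eqref{UniqueKids} (same parent plus same content forces equality) does the real work in both parts. The only difference is cosmetic — you organize the argument as a clean induction outward from the root, while the paper phrases the same idea as a contradiction obtained by picking the smallest $k$ for which $\phi_1^k(r) = \phi_1^k(s)$ (resp.\ $\phi_2^k(\sigma(r)) = \phi_2^k(\tau(r))$) — so the two proofs are interchangeable.
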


While straightforward, it is useful to observe the following corollary.

\begin{cor} \label{UniqueHMCor}
	Assume that $\ttt_1$ and $\ttt_2$ are factorization trees for $\alpha$.
	\begin{enumerate}[(i)]
		\item A map $\sigma:V(T_1)\to V(T_2)$ is an isomorphism if and only if it is a surjective homomorphism.
		\item If $\ttt_1\cong \ttt_2$ then there exists a unique isomorphism from $\ttt_1$ to $\ttt_2$.
		\item The identity map is the only isomorphism from a factorization tree $\ttt$ to itself.
	\end{enumerate}
\end{cor}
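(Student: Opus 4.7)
The plan is to deduce all three parts directly from Theorem \ref{UniqueHMThm}, which already does the substantive work; the role of the corollary is essentially to package two convenient consequences of that theorem.

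For part (i), the forward direction is immediate from the definition: any isomorphism is bijective (hence surjective) and edge-preserving, hence in particular a surjective homomorphism. For the reverse direction, I would invoke Theorem \ref{UniqueHMThm}\eqref{AllInjective} to conclude that a surjective homomorphism $\sigma$ from $\ttt_1$ to $\ttt_2$ is automatically injective and edge-preserving. Since it is already surjective, it is bijective and edge-preserving, hence an isomorphism.

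For part (ii), suppose $\ttt_1 \cong \ttt_2$, so by definition there exists at least one isomorphism $\sigma:\ttt_1\to \ttt_2$. If $\tau$ is any other isomorphism, then in particular $\tau$ is a homomorphism from $\ttt_1$ to $\ttt_2$, and Theorem \ref{UniqueHMThm}\eqref{UniqueHM} forces $\sigma = \tau$.

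For part (iii), I would apply part (ii) with $\ttt_1 = \ttt_2 = \ttt$. The identity map $V(T) \to V(T)$ is trivially a bijective homomorphism, hence an isomorphism, so by (ii) it must be the only one. The main thing to be careful about is simply citing the correct clause of Theorem \ref{UniqueHMThm} in each step, since the heavy lifting (automatic injectivity of homomorphisms between factorization trees, and uniqueness of homomorphisms) has already been done there; no substantial obstacle is expected.
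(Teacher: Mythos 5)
Your proof is correct and follows exactly the intended route: the paper states this corollary without proof, calling it a straightforward consequence of Theorem~\ref{UniqueHMThm}, and your argument spells out precisely that deduction, with each part citing the appropriate clause of the theorem.
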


\subsection{Primitive Factorization Trees} \label{PFT}

As part of our search for optimal factorizations of $\alpha$, there are two specific types of factorization trees that will be useful to study.  
We discuss the first of these trees in this section.

A factorization
\begin{equation} \label{AFact}
	\aaa =  \left( \frac{a_1}{b_1},\frac{a_2}{b_2},\ldots\right)
\end{equation}
is called {\it primitive} if $\max\{a_i,b_i\}$ is prime or equal to $1$ for all $i\in \nat$.  We define
\begin{equation*}
	\mathfrak P_\alpha = \left\{\aaa: \aaa\mbox{ is a primitive factorization of } \alpha_n \mbox{ for some } 0\leq n\leq N \right\}.
\end{equation*}
A factorization tree $\ttt = (T,\nu)$ for $\alpha$ is called a {\it primitive factorization tree} if 
\begin{equation} \label{PrimitiveDef}
	\nu(V(T)) \subseteq \mathfrak P_\alpha.
\end{equation} 
For instance, Example \eqref{30/7} given in the previous subsection is a primitive factorization tree.

Our goal for this subsection is to define a {\it maximal primitive factorization tree} $\ttt = (T,\nu)$ which we will show satisfies $\nu(V(T)) = \mathfrak P_\alpha$.
Assume that $n$ is a positive integer with $n<N$ and $\aaa$ is the factorization $\alpha_n$ given in \eqref{AFact}.
Further suppose that $p_{n+1}$ divides the numerator of $a$.  Define the collection of factorizations of $\alpha_{n+1}$ by
\begin{equation*}
	\delta(\aaa) = \left\{\left(\frac{a_1}{b_1},\cdots,\frac{a_{k-1}}{b_{k-1}},\frac{a_kp_{n+1}}{b_k},\frac{a_{k+1}}{b_{k+1}},\cdots\right): k\in \nat \mbox{ and } a_kp_{n+1} < b_k\right\}.
\end{equation*}
If $\aaa$ is a primitive factorization, then it is easily seen that all elements of $\delta(\aaa)$ are primitive factorizations, and moreover, $\aaa$ is a direct subfactorization
of every element in $\delta(\aaa)$.  Albeit trivial, it is also worth noting that $\delta(\aaa)$ is empty precisely when $a_kp_{n+1} \geq b_k$ for all $k\in \nat$.  

We may assume that
\begin{equation*}
	\aaa =  \left( \frac{a_1}{b_1},\frac{a_2}{b_2},\ldots, \frac{a_\ell}{b_\ell},1,1,\cdots\right),
\end{equation*}
where $a_\ell/b_\ell \ne 1$, and define $\epsilon(\aaa)$ to be the singleton set
\begin{equation*}
	\epsilon(\aaa) = \left\{\left( \frac{a_1}{b_1},\frac{a_2}{b_2},\ldots, \frac{a_\ell}{b_\ell},\frac{p_{n+1}}{1},1,1,\cdots\right)\right\}.
\end{equation*}
In the case where $p_{n+1}$ divides the denominator of $\alpha$, we define $\delta(\aaa)$ and $\epsilon(\aaa)$ in an analogous way.
Next set
\begin{equation*}
	\Delta(\aaa) = \delta(\aaa) \cup\epsilon(\aaa).
\end{equation*}
If $\aaa$ is primitive, we again have that all factorizations in $\Delta(\aaa)$ are primitive, and still, $\aaa$ is a direct subfactorization
of every element in $\Delta(\aaa)$.   Unlike $\delta(\aaa)$, we know that $\Delta(\aaa)$ is necessarily non-empty. 

We are careful to note that all three sets $\delta(\aaa$), $\epsilon(\aaa)$ and $\Delta(\aaa)$ require that $\aaa$ be a factorization of $\alpha_n$ for some $0\leq n < N$.
In particular, they depend on both $\alpha$ and $n$, although we have suppressed these dependencies in our notation.  In any examples seen in this article, 
we will apply these functions only to factorizations of the form $\nu(r)$, where $r$ is a vertex of a factorization tree for $\alpha$, so we will never encounter any ambiguity.

If $r$ is a vertex of a factorization tree $\ttt$, we shall write $\mathcal C(r)$ to denote the set of all children of $r$.
For a rational number $\alpha$, a {\it maximal primitive factorization tree for $\alpha$} is a factorization tree $\ttt$ for $\alpha$ 
such that $\nu(\mathcal C(r)) = \Delta(\nu(r))$ for all non-leaf vertices $r$ of $\ttt$.  Clearly every rational number has a maximal primitive factorization tree, 
and moreover, our next theorem shows that such trees are unique up to isomorphism.

\begin{thm} \label{CPFTIso}
	If $\alpha$ is a rational number and $\ttt_1$ and $\ttt_2$ are maximal primitive factorization trees for $\alpha$ then $\ttt_1 \cong \ttt_2$.
\end{thm}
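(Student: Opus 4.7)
The plan is to invoke Corollary \ref{UniqueHMCor}, which reduces the claim to producing a surjective homomorphism $\sigma\colon \ttt_1 \to \ttt_2$. I shall build $\sigma$ by induction on the depth (distance from the root) of vertices in $T_1$, and appeal to Theorem \ref{TreeHM} to verify that the inductive construction actually yields a homomorphism.

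For the base case, send the root $r_1$ of $T_1$ to the root $r_2$ of $T_2$. By property \eqref{RootVertex} of factorization trees, $\nu_1(r_1) = (1,1,\ldots) = \nu_2(r_2)$, so the content-preservation requirement is immediate.

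For the inductive step, suppose $\sigma$ has been defined on all vertices of depth at most $k$ and satisfies both conditions of Theorem \ref{TreeHM} there. Let $r'$ be a non-leaf vertex of $T_1$ of depth $k$; I need to extend $\sigma$ to $\mathcal C(r')$. The key observation is that for any non-leaf vertex $s$ of a maximal primitive factorization tree, the restriction $\nu|_{\mathcal C(s)}\colon \mathcal C(s)\to \Delta(\nu(s))$ is a bijection: surjectivity is exactly the defining maximality condition $\nu(\mathcal C(s)) = \Delta(\nu(s))$, while injectivity is a direct consequence of property \eqref{UniqueKids}, since two children of $s$ with the same content must coincide. Combining this with the inductive hypothesis $\nu_1(r') = \nu_2(\sigma(r'))$ yields a canonical bijection $\mathcal C(r')\leftrightarrow\Delta(\nu_1(r'))=\Delta(\nu_2(\sigma(r')))\leftrightarrow \mathcal C(\sigma(r'))$, and I define $\sigma$ on $\mathcal C(r')$ to be this composition. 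Condition \eqref{ContentPreserving2} of Theorem \ref{TreeHM} is built into the construction, and condition \eqref{ParentPreserving} holds because each child of $r'$ is sent to a child of $\sigma(r')$.

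Surjectivity of $\sigma$ follows from a symmetric induction on depth in $T_2$: the root $r_2$ is the image of $r_1$, and any non-root vertex $r''\in V(T_2)$ has its parent $\phi_2(r'')$ in the image by the inductive hypothesis, say $\phi_2(r'') = \sigma(s')$; since the bijection above hits every child of $\sigma(s')$, we conclude $r''\in \sigma(\mathcal C(s'))$. Corollary \ref{UniqueHMCor} then promotes $\sigma$ to an isomorphism, so $\ttt_1\cong \ttt_2$. The main subtlety of the argument is the injectivity of $\nu|_{\mathcal C(s)}$: without property \eqref{UniqueKids} of factorization trees, the content-matching prescription that defines $\sigma$ on $\mathcal C(r')$ would be ill-defined, and the entire inductive construction would collapse.
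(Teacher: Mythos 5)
Your proof is correct and follows essentially the same strategy as the paper's: build $\sigma$ level by level from the root, using the bijection $\nu|_{\mathcal C(s)}\colon \mathcal C(s)\to\Delta(\nu(s))$ (maximality for surjectivity, property \eqref{UniqueKids} for injectivity) to define $\sigma$ on children, and then appeal to Corollary \ref{UniqueHMCor} to upgrade the resulting surjective homomorphism to an isomorphism. The paper phrases the induction in terms of the sets $V_n$ of vertices carrying factorizations of $\alpha_n$, but since depth in a factorization tree coincides with this index $n$, the two inductions are the same.
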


In view of Theorem \ref{CPFTIso}, we shall now write $\ppp_\alpha$ to denote the maximal primitive factorization tree for $\alpha$.  
Strictly speaking, $\ppp_\alpha$ is an isomorphism class of factorization trees, but all of our results are independent of the choice of representative.
Hence, we shall often simply write $\ppp_\alpha$ to denote some particular maximal primitive factorization tree.

It is fairly clear from the definition that $\ppp_\alpha$ is a primitive factorization tree.  Our next result shows that $\ppp_\alpha$ has several maximality properties.   
We say that a rational number $\alpha = a/b$, with $\gcd(a,b) = 1$, is {\it square-free} if $a$ and $b$ are both square free.

\begin{thm}\label{PrimitiveComplete}
	If $\alpha$ is a rational number then $\ppp_\alpha$ is a primitive factorization tree.  Moreover, the following conditions hold.
	\begin{enumerate}[(i)]
		\item \label{PrimsAllOver} $\nu(V(\ppp_\alpha)) = \mathfrak P_\alpha$.
		\item \label{CPFTBiggest} If $\ttt$ is a primitive factorization tree for $\alpha$ then there exists a unique homomorphism $\sigma:V(\ttt)\to V(\ppp_\alpha)$.
			Moreover, $\sigma$ is injective and edge-preserving.
		\item \label{UniqueSqF} Suppose $\alpha$ is square-free and $\ttt = (T,\nu_0)$ is a primitive factorization tree for $\alpha$.   If every primitive
			factorization of $\alpha$ belongs to $\nu_0(V(T))$ then $\ttt\cong \ppp_\alpha$.
	\end{enumerate}
\end{thm}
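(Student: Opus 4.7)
The plan is to treat the three claims of Theorem \ref{PrimitiveComplete} in order, leveraging the inductive description of $\ppp_\alpha$ through the operator $\Delta$. That $\ppp_\alpha$ is itself a primitive factorization tree is immediate: every element of $\Delta(\aaa)$ is primitive whenever $\aaa$ is, since $\delta$ replaces $a_k$ by $a_k p_{n+1}$ only under the constraint $a_k p_{n+1} < b_k$ (preserving the prime max) while $\epsilon$ appends a fresh entry whose max is the prime $p_{n+1}$. For part \eqref{PrimsAllOver} the containment $\nu(V(\ppp_\alpha)) \subseteq \mathfrak P_\alpha$ is then automatic, and for the reverse direction I induct on $n \in \{0,\ldots,N\}$, proving that every primitive factorization of $\alpha_n$ equals $\nu(r)$ for some vertex $r$ of $\ppp_\alpha$. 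The base case $n=0$ is the root. For the step, given a primitive factorization $\bbb$ of $\alpha_{n+1}$, I locate the entry in which $p_{n+1}$ appears and show that removing one factor of $p_{n+1}$ yields a primitive factorization $\aaa$ of $\alpha_n$ with $\bbb \in \Delta(\aaa)$; combined with the inductive hypothesis, the defining property $\nu(\mathcal C(r)) = \Delta(\nu(r))$ of $\ppp_\alpha$ then supplies a vertex with content $\bbb$.

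For part \eqref{CPFTBiggest}, let $\ttt = (T,\nu_0)$ be an arbitrary primitive factorization tree and construct $\sigma : V(T) \to V(\ppp_\alpha)$ recursively by depth. Send the root of $T$ to the root of $\ppp_\alpha$; this is forced since both carry content $(1,1,\ldots)$. For a non-root vertex $v$ with parent $u$ whose image $\sigma(u)$ is already defined and satisfies $\nu(\sigma(u)) = \nu_0(u)$, the argument from part \eqref{PrimsAllOver} shows that $\nu_0(v) \in \Delta(\nu_0(u)) = \nu(\mathcal C(\sigma(u)))$, so some child of $\sigma(u)$ carries content $\nu_0(v)$. Axiom \eqref{UniqueKids} of the factorization-tree definition, applied to $\ppp_\alpha$, forces this child to be unique, so $\sigma(v)$ is unambiguously defined. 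By Theorem \ref{TreeHM} the resulting $\sigma$ is a homomorphism, and Theorem \ref{UniqueHMThm} supplies its uniqueness, injectivity, and edge-preserving property for free.

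For part \eqref{UniqueSqF}, part \eqref{CPFTBiggest} produces an injective homomorphism $\sigma : V(\ttt) \to V(\ppp_\alpha)$, and by Corollary \ref{UniqueHMCor} it suffices to show $\sigma$ is surjective. Here the plan leans on the injectivity of $\nu$ on $V(\ppp_\alpha)$ for square-free $\alpha$, which is the content of the forthcoming Theorem \ref{Injection}. Given any leaf $\ell$ of $\ppp_\alpha$, its content $\nu(\ell)$ is a primitive factorization of $\alpha$, so by hypothesis some $v \in V(T)$ satisfies $\nu_0(v) = \nu(\ell)$; then $\nu(\sigma(v)) = \nu(\ell)$, and injectivity of $\nu$ forces $\sigma(v) = \ell$. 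Every vertex of $\ppp_\alpha$ is an ancestor of some leaf, and parent-preservation of $\sigma$ lifts those root-to-leaf paths back into $\ttt$, so $\sigma(V(\ttt))$ exhausts $V(\ppp_\alpha)$.

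The main obstacle lies in the inductive extraction in part \eqref{PrimsAllOver}. The operators $\delta$ and $\epsilon$ are rigid: $\delta$ is guarded by the strict inequality $a_k p_{n+1} < b_k$, and $\epsilon$ only appends at the tail. The careful bookkeeping needed is to verify that primitivity of $\bbb$, combined with the coprimality axiom $\gcd(c_i,d_j)=1$ and the monotone prime ordering $p_1\geq \cdots \geq p_{n+1}$, forces the ``undo prime'' operation to land in exactly one of these two regimes: the boundary case $a_k p_{n+1} = b_k$ is excluded by coprimality, and the possibility of a new prime appearing at an interior position is excluded by the max-decreasing axiom combined with the fact that $p_{n+1}$ is the smallest prime appearing in $\alpha_{n+1}$.
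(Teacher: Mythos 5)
Your proposal is correct and follows essentially the same route as the paper: the ``undo prime'' step for part \eqref{PrimsAllOver} is the paper's Lemma~\ref{PrimitiveDelta} plus a check of the decreasing axiom, part \eqref{CPFTBiggest} is the paper's level-by-level recursion capped by Theorems~\ref{TreeHM} and~\ref{UniqueHMThm}, and part \eqref{UniqueSqF} rests on the same two ingredients (the homomorphism from \eqref{CPFTBiggest} and the injectivity of $\nu$ from Lemma~\ref{Injection}). The only cosmetic difference is that you argue surjectivity directly via leaves and parent-preservation and invoke Corollary~\ref{UniqueHMCor}, whereas the paper packages the same content through Theorem~\ref{IsomTrees}.
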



It is important to note that the square-free assumption in \eqref{UniqueSqF} cannot be removed.  In fact, if $\alpha$ fails to be square-free, then the same factorization of $\alpha$ may appear
in two distinct vertices of $\ppp_\alpha$.  However, a new smaller tree can be formed by removing certain vertices with duplicate factorizations.  The resulting tree $\ttt = (T,\nu_0)$
still satisfies $\nu_0(V(T)) = \mathfrak P_\alpha$, and hence satisfies the assumption of \eqref{UniqueSqF}, but it is not isomorphic to $\ppp_\alpha$.

\subsection{Optimal Factorization Trees}\label{OFT}

Recall that a factorization
\begin{equation*}
	\aaa =  \left( \frac{a_1}{b_1},\frac{a_2}{b_2},\ldots\right)
\end{equation*}
of $\alpha$ is called {\it optimal} if there exists a positive real number $T$ such that
\begin{equation} \label{Optimality}
	m_t(\alpha) = \left(\sum_{n=1}^\infty m\left(\frac{a_n}{b_n}\right)^t\right)^{1/t}\mbox{ for all } t\geq T.
\end{equation}
As we noted in the introduction every rational number has an optimal factorization.  We also note the following important result.

\begin{thm} \label{EOFPrim}
	If $\alpha$ is a rational number then every optimal factorization of $\alpha$ is primitive.
\end{thm}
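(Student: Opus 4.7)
The plan is to argue the contrapositive: starting from a factorization $\aaa=(a_1/b_1,a_2/b_2,\ldots)$ of $\alpha$ that fails to be primitive, I will produce another factorization $\aaa'$ of $\alpha$ whose truncated $t$-power sum is strictly smaller for all sufficiently large $t$, which forces $\aaa$ to miss the infimum defining $m_t(\alpha)$ and so cannot satisfy \eqref{Optimality}.

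Because $\aaa$ is not primitive, there is an index $k$ for which $\max\{a_k,b_k\}$ is composite and strictly greater than $1$. I will treat only the case $a_k=\max\{a_k,b_k\}$; the denominator case is entirely symmetric, via a split of $b_k$ in place of $a_k$. Fix any factorization $a_k=uv$ with integers $u,v\geq 2$. Since $u$ and $v$ both divide $a_k$ and $\gcd(a_k,b_k)=1$, both are coprime to $b_k$. I would then form $\aaa'$ by replacing the $k$-th entry $a_k/b_k$ of $\aaa$ by the two entries $u/1$ and $v/b_k$, and reordering the resulting sequence so that the maxima are non-increasing. Verifying that $\aaa'$ is a factorization is then routine: condition (i) is clear; (ii) holds because $(u/1)\cdot(v/b_k)=a_k/b_k$; (iii) follows from the reordering; and for (iv), the new numerators $u$ and $v$ are coprime to every old $b_m$ as divisors of $a_k$, while the old $a_m$ with $m\neq k$ are coprime to $b_k$ by hypothesis and trivially to $1$.

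To compare the two factorizations, set $S_t(\aaa)=\sum_n m(a_n/b_n)^t$ and observe that
\[
	S_t(\aaa)-S_t(\aaa')=(\log a_k)^t-(\log u)^t-(\log\max\{v,b_k\})^t.
\]
Since $uv=a_k$ with $u,v\geq 2$, we have $u<a_k$ and $v<a_k$; since $a_k$ is composite and $\gcd(a_k,b_k)=1$, also $b_k<a_k$. Dividing through by $(\log a_k)^t$, the two negative terms become $t$-th powers of ratios strictly in $(0,1)$ and hence vanish in the limit $t\to\infty$, so the displayed difference is eventually positive. Taking $t$-th roots yields $(S_t(\aaa'))^{1/t}<(S_t(\aaa))^{1/t}$ for all sufficiently large $t$. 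Because every factorization is a representation of $\alpha$, this implies $m_t(\alpha)\le (S_t(\aaa'))^{1/t}<(S_t(\aaa))^{1/t}$ eventually, contradicting \eqref{Optimality}.

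The only step requiring genuine care is the verification of coprimality condition (iv) for $\aaa'$, which rests on the elementary observation that any divisor of $a_k$ inherits coprimality to every $b_m$ from $\gcd(a_k,b_m)=1$. The rest of the argument is the elementary fact that $x^t$ eventually dominates a bounded sum $y_1^t+y_2^t$ whenever each $y_i<x$, combined with the flexibility to reorder entries without affecting the power sums.
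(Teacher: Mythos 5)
Your proposal is correct and follows essentially the same route as the paper's proof: both locate a composite $\max\{a_k,b_k\}$, split it into two nontrivial factors to produce a competing factorization, and note that the $t$-th power sum of measures drops for all large $t$ because the new terms all have strictly smaller measure than $\log\max\{a_k,b_k\}$. You supply more explicit detail on verifying the four factorization axioms and on the limit computation, which the paper dispatches as "straightforward to check," but the underlying idea and decomposition are the same.
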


As discussed in the introduction, our agenda is to use factorization trees to search for optimal factorizations of the rational number $\alpha$.  
For this purpose, we let
\begin{equation*}
	\mathfrak O_\alpha = \left\{\aaa: \aaa\mbox{ is an optimal factorization of } \alpha_n \mbox{ for some } 0\leq n\leq N \right\}.
\end{equation*}
In view of Theorem \ref{Rationals2}, $\mathfrak O_\alpha$ contains at least $N+1$ elements and is, in particular, non-empty.
From Theorem \ref{EOFPrim} we note that
\begin{equation*}
	\mathfrak O_\alpha \subseteq \mathfrak P_\alpha \subseteq \mathfrak F_\alpha.
\end{equation*}
A factorization tree $\ttt$ for $\alpha$ is called an {\it optimal factorization tree} if 
\begin{equation} \label{OptimalDef}
	\mathfrak O_\alpha \subseteq \nu(V(\ttt)).
\end{equation}
We should not regard the definition of optimal as an analog of primitive.  Indeed, the set containment in the definition of primitive \eqref{PrimitiveDef}
points in the opposite direction from that of \eqref{OptimalDef}.  
Because of this discrepancy, there exist optimal factorization trees which are not primitive in spite of the fact that $\mathfrak O_\alpha \subseteq \mathfrak P_\alpha$.
Indeed, a factorization tree may satisfy \eqref{OptimalDef} but still may have vertices $r$ such that $\nu(r)$ is not primitive.

By applying Theorem \ref{PrimitiveComplete}\eqref{PrimsAllOver} and Theorem \ref{EOFPrim}, we are already familiar with one particular 
optimal factorization tree.

\begin{cor} \label{OptimalPrimitive}
	If $\alpha$ is a rational number then $\ppp_\alpha$ is both optimal and primitive.
\end{cor}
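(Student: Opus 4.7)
The plan is to show the two assertions separately, with both essentially packaged from results already stated in the excerpt.

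For primitivity, I would simply invoke the first sentence of Theorem \ref{PrimitiveComplete}, which directly states that $\ppp_\alpha$ is a primitive factorization tree. No further argument is needed here.

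For optimality, I need to verify the containment $\mathfrak O_\alpha \subseteq \nu(V(\ppp_\alpha))$. I would proceed in two short steps. First, I would observe that each $\alpha_n$ (for $0 \leq n \leq N$) is itself a positive rational number with numerator and denominator in lowest terms, so Theorem \ref{EOFPrim} applies to it and tells us that every optimal factorization of $\alpha_n$ is primitive, i.e.\ a member of $\mathfrak P_{\alpha_n}$. Since a primitive factorization of $\alpha_n$ is by definition a primitive factorization of some intermediate value in the prime chain for $\alpha$, this yields $\mathfrak O_\alpha \subseteq \mathfrak P_\alpha$. Second, I would apply Theorem \ref{PrimitiveComplete}\eqref{PrimsAllOver}, which identifies $\mathfrak P_\alpha$ with $\nu(V(\ppp_\alpha))$. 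Chaining these gives
\begin{equation*}
    \mathfrak O_\alpha \subseteq \mathfrak P_\alpha = \nu(V(\ppp_\alpha)),
\end{equation*}
which is precisely the defining condition \eqref{OptimalDef} for $\ppp_\alpha$ to be an optimal factorization tree.

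There is no real obstacle here; the corollary is a direct bookkeeping consequence of two previously established results. The only point that might warrant a sentence of care is verifying that Theorem \ref{EOFPrim}, stated for $\alpha$, transfers to the truncations $\alpha_n$: this is immediate because $\alpha_n = \prod_{i \leq n} p_i^{\gamma(i)}$ has coprime numerator and denominator and satisfies all the standing hypotheses, so nothing special about $\alpha$ versus $\alpha_n$ is being used in the primitivity-of-optimal-factorizations result.
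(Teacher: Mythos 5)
Your proof is correct and takes essentially the same route as the paper, which cites Theorem \ref{PrimitiveComplete}\eqref{PrimsAllOver} together with Theorem \ref{EOFPrim} (the paper records the containment $\mathfrak O_\alpha \subseteq \mathfrak P_\alpha$ in the paragraph just before the corollary). Your extra remark that Theorem \ref{EOFPrim} must be applied to each truncation $\alpha_n$ rather than to $\alpha$ alone is a point the paper glosses over, and it is valid since the theorem holds for an arbitrary positive rational and the prime chain for $\alpha_n$ is an initial segment of the prime chain for $\alpha$.
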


As our goal is to locate an optimal factorization for a given rational number $\alpha$, Corollary \ref{OptimalPrimitive} helps us considerably.  Indeed, 
if we can determine $\ppp_\alpha$ then we know that every optimal factorization of $\alpha$ lies among the leaf vertices of $\ppp_\alpha$.  Then we may use the techniques of
\cite{SamuelsMetric} to search the leaf vertices of $\ppp_\alpha$ for an optimal factorization.  

Recall that, to obtain the maximal primitive factorization tree $\ppp_\alpha$, we imposed the restriction $\nu(\mathcal C(r)) = \Delta(\nu(r))$ on an arbitrary factorization tree.
However, it is possible to impose a stronger restriction on $\nu(\mathcal C(r))$ while still preserving the optimality of the tree.  This further abbreviates our search for 
optimal factorizations.  A factorization tree $\ttt$ for $\alpha$ is called a {\it canonical optimal factorization tree} for $\alpha$ if
\begin{equation*}
	\nu(\mathcal C(r)) = \begin{cases} \delta(\nu(r)) & \mbox{if } \delta(\nu(r)) \ne \emptyset \\
								\epsilon(\nu(r)) & \mbox{if } \delta(\nu(r)) = \emptyset.
					\end{cases}
\end{equation*}
for all non-leaf vertices $r$ of $\ttt$.  As was the case with our maximal primitive factorization trees, canonical optimal factorization trees are unique up to isomporphism.

\begin{thm} \label{COFTIso}
 	If $\alpha$ is a rational number and $\ttt_1$ and $\ttt_2$ are canonical optimal factorization trees for $\alpha$ then $\ttt_1 \cong \ttt_2$.
\end{thm}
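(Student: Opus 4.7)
The plan is to construct a bijection $\sigma : V(T_1) \to V(T_2)$ level by level and then invoke Theorem \ref{TreeHM} to conclude that $\sigma$ is an isomorphism. I shall call the integer $0 \le n \le N$ for which $\nu(r)$ is a factorization of $\alpha_n$ the \emph{level} of a vertex $r$. By condition \eqref{RootVertex} the root sits at level $0$, and by condition \eqref{ParentRelations} each edge raises the level by exactly one, so level and tree-depth coincide. According to Theorem \ref{TreeHM}, I must produce a bijection $\sigma$ satisfying $\sigma \circ \phi_1 = \phi_2 \circ \sigma$ on non-root vertices together with $\nu_1 = \nu_2 \circ \sigma$.

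First I would send the root $r_1$ of $T_1$ to the root $r_2$ of $T_2$; both have content $(1,1,\ldots)$ by \eqref{RootVertex}. Now suppose inductively that $\sigma$ has been defined on all vertices of $T_1$ at levels $\le n$ so that it restricts to a content-preserving bijection from level $n$ of $T_1$ onto level $n$ of $T_2$. For a non-leaf vertex $r$ at level $n$, the canonical optimal condition gives
\[
\nu_1(\mathcal C(r)) = \begin{cases} \delta(\nu_1(r)) & \text{if } \delta(\nu_1(r)) \ne \emptyset, \\ \epsilon(\nu_1(r)) & \text{otherwise,} \end{cases}
\]
and because $\nu_1(r) = \nu_2(\sigma(r))$, the analogous equation for $\mathcal C(\sigma(r))$ in $T_2$ produces the same set on the right-hand side. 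Property \eqref{UniqueKids} ensures that $\nu_1$ restricted to $\mathcal C(r)$ and $\nu_2$ restricted to $\mathcal C(\sigma(r))$ are both injective, hence both put their vertex sets into canonical bijection with this common set of factorizations. There is therefore a unique content-preserving bijection $\mathcal C(r) \to \mathcal C(\sigma(r))$ that matches each child of $r$ to the child of $\sigma(r)$ sharing its content, and I define $\sigma$ on $\mathcal C(r)$ via this bijection.

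To finish I would verify that $\sigma$ is globally bijective. Injectivity holds because $\sigma$ preserves levels, and within a single level it is assembled from the bijections $\mathcal C(r) \to \mathcal C(\sigma(r))$ whose images are pairwise disjoint (distinct parents in $T_2$ have disjoint sets of children). Surjectivity follows by induction on levels: every non-root vertex $s$ of $T_2$ has parent at a strictly lower level which, by the inductive hypothesis, equals $\sigma(r)$ for some $r \in V(T_1)$, whence $s \in \mathcal C(\sigma(r)) = \sigma(\mathcal C(r))$. Conditions \eqref{ParentPreserving} and \eqref{ContentPreserving2} of Theorem \ref{TreeHM} hold by construction, so that theorem delivers the desired isomorphism. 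The main obstacle is mere bookkeeping: one must recall that $\delta$ and $\epsilon$ carry implicit dependencies on $\alpha$ and $n$, and check that these dependencies agree on the two sides. This is automatic once level equals depth and $\nu_1(r) = \nu_2(\sigma(r))$, but it is the one place the induction could quietly go wrong.
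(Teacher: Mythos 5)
Your proposal is correct and uses essentially the same level-by-level inductive construction that the paper employs for Theorem \ref{CPFTIso}, which is precisely the proof the paper says Theorem \ref{COFTIso} closely follows (and hence omits). The paper recursively defines maps $\sigma_n : V_n(\ttt_1) \to V_n(\ttt_2)$ by pushing each vertex through its parent's already-defined image and locating the unique child with matching content (uniqueness via condition \eqref{UniqueKids}); you define $\sigma$ on the children of an already-mapped vertex $r$ by the content-matching bijection $\mathcal C(r) \to \mathcal C(\sigma(r))$, again relying on \eqref{UniqueKids}. These are the same argument organized slightly differently, and your observation that the $\delta/\epsilon$ case distinction transfers across the two trees because $\nu_1(r) = \nu_2(\sigma(r))$ forces $\delta(\nu_1(r)) = \delta(\nu_2(\sigma(r)))$ is exactly the point where the $\Delta$ used for $\ppp_\alpha$ gets replaced by the canonical optimal rule; this is the only substantive change needed relative to the proof of \ref{CPFTIso}.
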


We now write $\ooo_\alpha$ to denote the canonical optimal factorization tree for $\alpha$.  As in the case of $\ppp_\alpha$, although $\ooo_\alpha$ is an 
isomorphism class of factorization trees, we use this notation to denote a specific canonical optimal factorization tree.  All of our results are indeed independent 
of the choice of representative.

Certainly $\ooo_\alpha$ is a primitive factorization tree, so Theorem \ref{PrimitiveComplete} \eqref{CPFTBiggest} asserts the existence of a unique injective edge-preserving
homomorphism $\sigma:V(\ooo_\alpha) \to V(\ppp_\alpha)$.  It is trivial to provide examples where this map fails to be a surjection, meaning that $\ooo_\alpha$ is, in general,
strictly smaller than $\ppp_\alpha$.  Therefore, the following result is a direct improvement over Corollary \ref{OptimalPrimitive}.

\begin{thm} \label{OptimalOptimal}
	If $\alpha$ is a rational number then $\ooo_\alpha$ is both optimal and primitive.
\end{thm}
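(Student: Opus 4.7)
The plan is to prove the two assertions---primitivity and optimality---separately and inductively. For primitivity, a depth-induction on $\ooo_\alpha$ suffices: the root contains $(1,1,\ldots)$ which is vacuously primitive, and subsection \ref{PFT} observes that both $\delta(\aaa)$ and $\epsilon(\aaa)$ consist of primitive factorizations whenever $\aaa$ is primitive, so the canonical-child rule propagates primitivity. For optimality, I would induct on $n$ on the statement $\mathfrak O_{\alpha_n} \subseteq \nu(V(\ooo_\alpha))$. The base case $n=0$ is immediate. For the inductive step, given optimal $\bbb$ of $\alpha_{n+1}$---primitive by Theorem \ref{EOFPrim}---the goal is to exhibit $\aaa \in \mathfrak O_{\alpha_n}$ such that $\bbb$ is a canonical child of the vertex containing $\aaa$; by the inductive hypothesis, that vertex lies in $\ooo_\alpha$, and then so does the vertex containing $\bbb$.

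Write $L_k$ for the sorted tuple of measures of any optimal factorization of $\alpha_k$; this is well-defined because optimality for all sufficiently large $t$ coincides with lexicographic minimality of the sorted tuple of measures. I would split into two cases. In Case I, where some $\aaa^* \in \mathfrak O_{\alpha_n}$ has $\delta(\aaa^*) \ne \emptyset$, a lex comparison using the fact that $\delta$ preserves sorted measures yields $L_{n+1} = L_n$. I claim $\bbb$ cannot contain a standalone entry $(p_{n+1}/1)$ or $(1/p_{n+1})$: removing such an entry would leave a factorization of $\alpha_n$ whose sorted tuple is $L_n$ with one instance of $\log p_{n+1}$ deleted, which is strictly lex-less than $L_n$ and contradicts optimality. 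Consequently $p_{n+1}$ sits inside the ``min'' side of some entry of $\bbb$. Dividing that side by $p_{n+1}$ produces a primitive parent $\aaa$ with $\bbb \in \delta(\aaa)$ and with the same sorted tuple as $\bbb$, so $\aaa \in \mathfrak O_{\alpha_n}$, and the inductive hypothesis together with $\delta(\aaa) \ne \emptyset$ gives $\bbb \in \delta(\aaa) \subseteq \nu(V(\ooo_\alpha))$.

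In Case II, where every optimal $\aaa^*$ of $\alpha_n$ has $\delta(\aaa^*) = \emptyset$, a parallel lex computation yields $L_{n+1} = L_n \cup \{\log p_{n+1}\}$, so $\bbb$ contains an ``extra'' entry whose max equals $p_{n+1}$. The heart of the proof is showing this entry must be the standalone $(p_{n+1}/1)$ (when $p_{n+1}$ divides the numerator of $\alpha$; the denominator case is symmetric). Such an entry has the form $(p_{n+1}/c)$ with $c < p_{n+1}$ and $\gcd(c,p_{n+1}) = 1$; condition (iv) in the definition of a factorization forces $c$ to be a product of denominator primes of $\alpha$ occurring among $p_1,\dots,p_{n+1}$, and the sorted ordering $p_1 \geq \cdots \geq p_{n+1}$ forces each such prime to be $\geq p_{n+1}$. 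Combined with $c < p_{n+1}$, this forces $c = 1$. Removing the standalone $(p_{n+1}/1)$ from $\bbb$ therefore yields $\aaa^* \in \mathfrak O_{\alpha_n}$ with $\bbb \in \epsilon(\aaa^*)$; by the inductive hypothesis and the canonical rule (using $\delta(\aaa^*) = \emptyset$), $\bbb \in \ooo_\alpha$.

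The main obstacle is the standalone-forcing step in Case II, since it is the only point where the sorted-primes ordering and the cross-coprimality condition (iv) enter in a genuinely structural way. The lex computations of $L_{n+1}$ in each case, the invariance of sorted measures under $\delta$, and the bookkeeping that $\delta$-descents of primitive factorizations remain primitive should all be routine given the framework in the paper.
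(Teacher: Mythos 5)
Your proof is correct, but it takes a genuinely different route from the paper's. The paper does not induct prime-by-prime on $n$; instead it introduces the notion of a \emph{separation index} for $\alpha$ and inducts on the sequence of separation indices. The two technical ingredients are Theorem \ref{OptimalAncestorsSep} (ancestors of optimal vertices that sit at separation indices are themselves optimal, proved via Lemma \ref{Subs}) and Theorem \ref{OptimalSeparation} (optimal factorizations of $\alpha_n$ lie in $\nu(V(\ooo_\alpha))$ when $n$ is a separation index); Theorem \ref{OptimalOptimal} then falls out in two lines because every $n$ is a separation index for $\alpha_n$. The point of the separation-index machinery is precisely to handle blocks of repeated primes cleanly: between consecutive separation indices all the ``new'' primes have the same value, and the paper treats a whole such block at once, writing an optimal factorization of $\alpha_{n_\ell}$ as an optimal factorization of $\alpha_{n_{\ell-1}}$ with powers $p^{j_i}$ absorbed into the old entries plus $z$ standalone copies of $p/1$ at the tail. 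Your approach replaces all of this with a direct step-by-step induction on $n$, dispatching the two regimes via the dichotomy ``some optimal of $\alpha_n$ has $\delta\ne\emptyset$'' versus ``every optimal has $\delta=\emptyset$,'' together with the lexicographic-minimality reading of optimality for large $t$. That reading is implicit in the paper (through the asymptotics of $(\sum m(\alpha_n)^t)^{1/t}$) but never spelled out; making it explicit is what lets you avoid separation indices entirely, which is a real simplification. The one step you labelled routine, the ``parallel lex computation'' giving $L_{n+1}=L_n\cup\{\log p_{n+1}\}$ in Case II, actually carries more weight than its Case I analogue: it needs the observation that every nonzero entry of a sorted tuple of a factorization of $\alpha_n$ is at least $\log p_{n+1}$ (since $p_1\ge\cdots\ge p_{n+1}$), otherwise a gap $L_n<L_{n+1}<L_n\cup\{\log p_{n+1}\}$ is not ruled out. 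You do invoke exactly this prime-ordering in the standalone-forcing step, so the idea is present in the proposal; it simply needs to be threaded through the $L_{n+1}$ computation as well, and one should also verify that ties among multiple $(p_{n+1}/1)$ entries cannot misplace the appended entry relative to $\epsilon(\aaa^*)$ (it cannot, because all such entries are identical and sit at the tail). The trade-off: your argument is shorter and more elementary, but the paper's separation-index results are reused elsewhere (e.g.\ Theorem \ref{OptimalAncestorsSep} is applied directly to prune the tree in Example \ref{Complicated}), which your bare induction would not furnish.
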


\subsection{Measure Class Graphs} \label{MCG}

Suppose $\aaa = (a_1/b_1,a_2/b_2,\ldots)\in (\rat^\times)^\infty$.  We define the {\it measure} of $\aaa$ by
\begin{equation*}
	m(\aaa) = \left( m\left(\frac{a_1}{b_1}\right), m\left(\frac{a_2}{b_2}\right), \cdots\right)
\end{equation*}
and note that $m(\aaa) \in \real^\infty$.  In the previous two subsections, we provided examples of trees which are guaranteed to contain all optimal 
factorizations of a given rational number $\alpha$.  In some cases, however, we may only be interested in determining the measure of each optimal factorization,
so $\ppp_\alpha$ or $\ooo_\alpha$ contains more information than is required.  Hence, we are motivated to consider trees that contain only information about the measures of
factorizations, called {\it measure class trees}, to be defined momentarily.

If $\bbb = (c_1/d_1,c_2/d_2,\ldots)$ is another element of $\rat^\infty$, we say that $\aaa$ is {\it measure equivalent} to $\bbb$ if the following conditions hold.
\begin{enumerate}[(i)]
	\item $\aaa$ and $\bbb$ are both factorizations of $\alpha$ for some $\alpha\in \rat$.
	\item $m(\aaa) = m(\bbb)$.  
\end{enumerate}
In this case, we write $\aaa\sim \bbb$.  It is clear that $\sim$ is an equivalence relation on $\mathfrak F_\alpha$ and we write $[\aaa]$ to denote the 
equivalence class of $\aaa$.  Write
\begin{equation*}
	\overline{\mathfrak F}_\alpha = \left\{ [\aaa]: \aaa\in \mathfrak F_\alpha\right\}
\end{equation*}
and define the surjection $f:\mathfrak F_\alpha\to \overline{\mathfrak F}_\alpha$ by $f(\aaa) = [\aaa]$.

Suppose $\ttt = (T,\nu)$ is a factorization tree for $\alpha$ and $\G = (G,\mu)$ is a digraph data structure for $\overline{\mathfrak F}_\alpha$.
$\G$ is called a {\it measure class graph} for $\ttt$ if
\begin{enumerate}[(i)]
	\item \label{MuInjection} $\mu$ is an injection.
	\item\label{SurjFaithful} There exists a surjective faithful $f$-homomorphism $\pi:V(T) \to V(G)$.
\end{enumerate}
We remind the reader that condition \eqref{SurjFaithful} asserts the existence of the commutative diagram

$$\begin{CD}
	V(T)   @>\nu>>  \mathfrak F_\alpha\\
	@VV\pi V @VVfV\\
	V(G)  @>\mu>>   \overline{\mathfrak F}_\alpha
\end{CD}$$
\smallskip

In this case, $\pi$ is called the {\it projection map from $\ttt$ onto $\G$} and we note that this map is unique.  After all, if $\pi_1$ and $\pi_2$ are projection maps
from $\ttt$ onto $\G$, then
\begin{equation*}
	\mu(\pi_1(r)) = f(\nu(r)) = \mu(\pi_2(r)).
\end{equation*}
But $\mu$ is injection so we see that $\pi_1(r) = \pi_2(r)$.  Moreover, every factorization tree has a measure class graph.  
Indeed, we may define an equivalence relation on $V(\ttt)$ by declaring $r\sim s$ precisely when 
$\nu(r) \sim \nu(s)$.  Now write $[r]$ for the equivalence class containing $r$.  
The resulting quotient digraph having content map $[r] \mapsto [\nu(r)]$ is easily verified to be a measure class graph for $\ttt$.
The following theorem asserts that measure class graphs are unique up to isomorphism.

\begin{thm} \label{MCGUnique}
	Let $\ttt_1$ and $\ttt_2$ be factorization trees for $\alpha$ with $\ttt_1\cong\ttt_2$.  
	If $\mathcal G_1$ and $\mathcal G_2$ are measure class graphs for $\ttt_1$  and $\ttt_2$, respectively, then $\G_1\cong \G_2$.
\end{thm}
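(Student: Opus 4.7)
The plan is to transport the known isomorphism $\tau\colon V(T_1)\to V(T_2)$ of factorization trees (which exists uniquely by Corollary \ref{UniqueHMCor}) down through the two projection maps $\pi_1\colon V(T_1)\to V(G_1)$ and $\pi_2\colon V(T_2)\to V(G_2)$ to produce the desired map $\rho\colon V(G_1)\to V(G_2)$. Concretely, I would define $\rho$ as the unique map satisfying $\rho\circ\pi_1 = \pi_2\circ\tau$. Since $\pi_1$ is surjective this determines $\rho$ on all of $V(G_1)$; the real content is well-definedness, which proceeds as follows. If $\pi_1(r)=\pi_1(r')$, then $\mu_1(\pi_1(r))=\mu_1(\pi_1(r'))$, and the commutativity in definition \eqref{SurjFaithful} gives $[\nu_1(r)]=[\nu_1(r')]$, i.e.\ $\nu_1(r)\sim\nu_1(r')$. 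Because $\tau$ is an isomorphism (hence an $\mathrm{id}$-homomorphism), $\nu_2\circ\tau=\nu_1$, so $\nu_2(\tau(r))\sim\nu_2(\tau(r'))$, whence $\mu_2(\pi_2(\tau(r)))=\mu_2(\pi_2(\tau(r')))$; then injectivity of $\mu_2$ (condition \eqref{MuInjection}) forces $\pi_2(\tau(r))=\pi_2(\tau(r'))$. Running the same construction with $\tau^{-1}$ in place of $\tau$ produces a two-sided inverse, so $\rho$ is a bijection.

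Next I would verify the two properties needed to promote $\rho$ to an $\mathrm{id}$-isomorphism of digraph data structures. Content-preservation is a direct diagram chase: for any $v=\pi_1(r)$,
\begin{equation*}
	\mu_2(\rho(v)) = \mu_2(\pi_2(\tau(r))) = f(\nu_2(\tau(r))) = f(\nu_1(r)) = \mu_1(\pi_1(r)) = \mu_1(v).
\end{equation*}
For edge-preservation, suppose $(v,w)\in E(G_1)$. Since $\pi_1$ is surjective, both $v$ and $w$ lie in $\pi_1(V(T_1))$, so faithfulness of $\pi_1$ yields an edge $(r,s)\in E(T_1)$ with $\pi_1(r)=v$ and $\pi_1(s)=w$. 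Edge-preservation of $\tau$ gives $(\tau(r),\tau(s))\in E(T_2)$, and then the homomorphism property of $\pi_2$ produces the edge $(\pi_2(\tau(r)),\pi_2(\tau(s)))=(\rho(v),\rho(w))\in E(G_2)$. The reverse implication $(\rho(v),\rho(w))\in E(G_2)\Rightarrow (v,w)\in E(G_1)$ follows by applying the same argument to $\rho^{-1}$, which is constructed symmetrically and satisfies an analogous square.

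The main obstacle I anticipate is marshalling the faithfulness hypothesis at the right moment: it is precisely this property that allows an edge downstairs in $G_1$ to be lifted to an edge in $T_1$, and without it one could only push edges in the forward direction $T\to G$. Everything else is a commutative-diagram chase powered by the injectivity of the content maps $\mu_i$ and the identity $\nu_2\circ\tau=\nu_1$ coming from the tree isomorphism.
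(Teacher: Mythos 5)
Your proposal is correct and follows essentially the same route as the paper: define $\rho=\pi_2\tau\pi_1^{-1}$, check well-definedness via injectivity of $\mu_2$ and the commuting square, obtain bijectivity by running the construction with $\tau^{-1}$, and use faithfulness of $\pi_1$ to lift edges of $G_1$ back to $T_1$ when verifying edge-preservation. The paper's proof is the same argument with the roles of the letters $\sigma$ and $\tau$ swapped.
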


In view of Theorem \ref{MCGUnique}, we shall simply write $\overline \ttt$ for the measure class graph of $\ttt$.  Note that the the skeleton of $\overline \ttt$
is simply the quotient graph of $T$ by the equivalence relation on vertices given by $r\sim s \iff \nu(r) \sim \nu(s)$.

In general, $\overline \ttt$ is not a tree as it is possible for
a factorization to have two direct subfactorizations that are not equivalent.  For instance, taking $\alpha = 4/15$, we observe that
\begin{equation*}
	\aaa = \left(\frac{1}{5},\frac{2}{3},\frac{2}{1}\right)
\end{equation*}
is a factorization of $\alpha_4$.  However, $\aaa$ has both
\begin{equation*}
	\bbb_1 = \left(\frac{1}{5},\frac{2}{3}\right)\quad\mathrm{and}\quad \bbb_2 = \left(\frac{1}{5},\frac{1}{3},\frac{2}{1}\right)
\end{equation*}
as direct subfactorizations, which are not equivalent.  Therefore, if $h\in V(\overline \ttt)$ with $\nu(h) = [\aaa]$ then there can exist distinct vertices $g_1, g_2 \in V(\G)$,
one having $\mu(g_1) = [\bbb_1]$ and the other having $\mu(g_2) = [\bbb_2]$.  As $\pi$ must be faithful and surjective, both $(g_1,h)$ and $(g_2,h)$ are edges of $\G$.
However, if $\ttt$ is known to be primitive and $\alpha$ is square-free, then this situation cannot occur.  A tree $T$ is {\it binary} if every vertex has at most two children.

\begin{thm} \label{BinTree}
	If $\ttt$ is a primitive factorization tree for the square-free rational number $\alpha$ then the skeleton of $\overline \ttt$ is a binary tree.
\end{thm}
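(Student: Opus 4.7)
The plan is to verify separately that the skeleton of $\overline{\ttt}$ is a tree (each non-root vertex has a unique incoming edge) and that it is binary (each vertex has at most two outgoing edges). The pivotal use of square-freeness is that the primes $p_1 > p_2 > \cdots > p_N$ of $\alpha$ are now strictly decreasing, so each $p_i$ occurs in at most one entry of any factorization $\aaa \in \mathfrak{F}_\alpha$. In particular, every primitive factorization of $\alpha_{n+1}$ admits a \emph{unique} direct subfactorization, obtained by removing $p_{n+1}$ from the single entry that contains it.

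For the tree property I would show that if $\aaa, \aaa' \in \mathfrak{P}_\alpha$ are primitive factorizations of $\alpha_{n+1}$ with $\aaa \sim \aaa'$, then their unique direct subfactorizations $\bbb, \bbb'$ satisfy $\bbb \sim \bbb'$; by faithfulness and surjectivity of the projection $\pi\colon V(T) \to V(\overline T)$, this forces each vertex of $\overline T$ to have at most one parent class. I would split cases by whether the entry of $\aaa$ containing $p_{n+1}$ has maximum equal to $p_{n+1}$ (Case A) or strictly greater (Case B); since $p_{n+1}$ appears as a max of some entry of $\aaa$ iff $\log p_{n+1}$ sits in the measure sequence, and $m(\aaa) = m(\aaa')$, both factorizations lie in the same case. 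In Case B the relevant entry has the form $cp_{n+1}/q$ with $q$ a prime exceeding $cp_{n+1}$, so dividing by $p_{n+1}$ preserves the max $q$ and hence $m(\bbb) = m(\aaa) = m(\aaa') = m(\bbb')$. In Case A, since every prime of $\alpha_n$ is strictly larger than $p_{n+1}$ (this is where square-freeness enters), the companion factor in the $p_{n+1}$-entry has no available prime to accommodate and must equal $1$; the entry is thus exactly $p_{n+1}/1$ (or $1/p_{n+1}$), and removing $p_{n+1}$ turns it into $1/1$, so $m(\bbb), m(\bbb')$ both equal $m(\aaa)$ with its last nontrivial coordinate stripped.

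For the binary property I would analyze the children of a class $[\bbb]$ in $\overline\ttt$. Any such child arises as $\pi(s)$ where $s$ is a child in $T$ of some vertex $r$ with $\nu(r) = \bbb' \sim \bbb$, and because $\ttt$ is primitive we have $\nu(s) \in \Delta(\bbb') = \delta(\bbb') \cup \epsilon(\bbb')$. Every element of $\delta(\bbb')$ multiplies one entry by $p_{n+1}$ under the constraint $a_k p_{n+1} < b_k$, so the max of that entry is unchanged and each element carries measure $m(\bbb') = m(\bbb)$; hence the $\delta$-children across all representatives $\bbb'$ collapse into a single class. The singleton $\epsilon(\bbb')$ has measure $m(\bbb)$ with $\log p_{n+1}$ appended, and likewise all $\epsilon$-children fall into one class. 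Thus $[\bbb]$ has at most two child classes, completing the binary claim.

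The main obstacle is the Case A analysis in the tree step: one must carefully rule out the possibility that the $p_{n+1}$-entry has a nontrivial companion factor. This uses in an essential way that $\alpha$ is square-free, so that the primes of $\alpha_n$ are all strictly larger than $p_{n+1}$ and no such factor is available. Without square-freeness the argument collapses, as illustrated by the $4/15$ example preceding the statement, where two nonequivalent direct subfactorizations of a single factorization of $\alpha_4$ appear simultaneously.
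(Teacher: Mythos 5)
Your proposal is correct and tracks the paper's own proof closely: the same two-stage structure (first showing that measure-equivalent vertices have measure-equivalent parents, then bounding the children by analyzing $\delta$ versus $\epsilon$), the same case split on whether $p_{n+1}$ is the maximum of its entry, and the same reliance on square-freeness to force the $p_{n+1}$-entry to be exactly $p_{n+1}/1$ in the first case.
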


In the case of Theorem \ref{BinTree}, we say that $\overline\ttt$ is a the {\it measure class tree} for $\ttt$.
As $\ppp_\alpha$ and $\ooo_\alpha$ are both primitive factorization trees, Theorem \ref{BinTree} asserts that $\overline{\ppp_\alpha}$ and $\overline{\ooo_\alpha}$
are also binary trees.

\section{Proofs of Results} \label{Proofs}

\subsection{Digraph Data Structures} 

The proof of Theorem \ref{TreeHM} is extremely simple but we include it here for completeness purposes.

\begin{proof}[Proof of Theorem \ref{TreeHM}]
	The proof of the first assertion requires only showing that \eqref{ParentPreserving} $\iff$ \eqref{EdgePreserving} which is straightforward.
	Assuming now that $\sigma$ is an injective homomorphism, suppose that $r,s\in V(T_1)$ are such that $(\sigma(r),\sigma(s))\in E(T_2)$.
	Hence, property \eqref{ParentPreserving} gives
	\begin{equation*}
		\sigma(r) = \phi_2(\sigma(s)) = \sigma(\phi_1(s)).
	\end{equation*}
	Since $\sigma$ is assumed to be injective, we obtain $\phi_1(s) = r$ so that $(r,s)\in E(T_1)$ as required.
\end{proof}

\subsection{Factorization Trees}

We begin by noting that the proof of Proposition \ref{Subfact} follows directly from the definition of subfactorization and direct subfactorization.
Hence, we proceed to establish the important properties found in Theorem \ref{UniqueHMThm} regarding homomorphisms from one factorization tree to another.

\begin{proof}[Proof of Theorem \ref{UniqueHMThm}]
	Set $\ttt_1 = (T_1,\nu_1)$ and $\ttt_2 = (T_2,\nu_2)$.
	To prove \eqref{AllInjective}, assume that $\sigma(r) = \sigma(s)$ and let $\phi_1$ and $\phi_2$ be the parenting maps for $\ttt_1$ and $\ttt_2$, respectively.  
	We know that $\nu_2(\sigma(r)) = \nu_2(\sigma(s))$, and from the definition of homomorphism, we get $\nu_1(r) = \nu_1(s)$.
	We may assume that this is a factorization of $\alpha_n$.  By applying properties \eqref{RootVertex} and \eqref{ParentRelations},
	we find that $\phi_1^n(r)$ and $\phi_1^n(s)$ both equal the root vertex of $\ttt_1$.
	
	If $r\ne s$ then we may assume that $k$ is the smallest positive integer such that $\phi_1^k(r) = \phi_1^k(s)$.  Therefore, $\phi_1^{k-1}(r)$ and $\phi_1^{k-1}(s)$ 
	are both children of this vertex.  We also have that
	\begin{equation*}
		\nu_1(\phi_1^{k-1}(r)) = \nu_2(\sigma(\phi_1^{k-1}(r))) = \nu_2(\phi_2^{k-1}(\sigma(r))),
	\end{equation*}
	and the analogous equalities for $s$ yield $\nu_1(\phi_1^{k-1}(r)) = \nu_1(\phi_1^{k-1}(s))$.  Applying property \eqref{UniqueKids} in the definition of factorization tree,
	we obtain that $\phi_1^{k-1}(r) = \phi_1^{k-1}(s)$, a contradiction.  It follows from Theorem \ref{TreeHM} that $\sigma$ is edge-preserving.

	Now suppose that $\tau$ is a homomorphism from $\ttt_1$ to $\ttt_2$.  Assume that $r\in V(T_1)$ is such that $\nu_1(r)$ is a factorization of $\alpha_n$.
	Then we know that both $\nu_2(\sigma(r))$ and $\nu_2(\tau(r))$ are factorizations of $\alpha_n$ as well.
	Therefore, $\phi_2^n(\sigma(r))$ and $\phi_2^n(\tau(r))$ both equal the root vertex of $\ttt_2$. 
	
	Following our proof of \eqref{AllInjective}, if $\sigma(r)\ne \tau(r)$ then we may assume that $k$ is the smallest positive integer such that $\phi_2^k(\sigma(r)) = \phi_2^k(\tau(r))$.  
	Then $\phi_2^{k-1}(\sigma(r))$ and $\phi_2^{k-1}(\tau(r))$ are both children of this vertex.  We see that
	\begin{equation*}
		\nu_2(\phi_2^{k-1}(\sigma(r))) = \nu_2(\sigma(\phi_1^{k-1}(r))) = \nu_1(\phi_1^{k-1}(r)),
	\end{equation*}
	and using the same argument with $\tau$ in place of $\sigma$ yields $$\nu_2(\phi_2^{k-1}(\sigma(r))) = \nu_2(\phi_2^{k-1}(\tau(r))).$$
	Again, \eqref{UniqueKids} in the definition of factorization tree gives $\phi_2^{k-1}(\sigma(r)) = \phi_2^{k-1}(\tau(r))$,
	a contradiction.
\end{proof}

\subsection{Primitive Factorization Trees}

We regard Theorem \ref{PrimitiveComplete} as the main result of this subsection and one of the primary results of this article.  As we shall see, its proof is quite
involved, particularly that of \eqref{UniqueSqF}.  Before proceeding, we must establish the preliminary result that
the maximal primitive factorization tree for $\alpha$ is unique up to isomorphism.

\begin{proof}[Proof of Theorem \ref{CPFTIso}]
	Suppose that $\ttt_1 = (T_1,\nu_1)$ and $\ttt_2 = (T_2,\nu_2)$.  Let
	\begin{equation*}
		V_n(\ttt_1) = \left\{r\in V(\ttt_1): \nu(r)\mbox{ is a factorization of } \alpha_n\right\}
	\end{equation*}
	and define $V_n(\ttt_2)$ in an analogous way.  So $V(\ttt_1)$ an $V(\ttt_2)$ may be written as the disjoint unions
	\begin{equation} \label{Disjoints}
		V(\ttt_1) = \bigcup_{n=0}^N V_n(\ttt_1)\quad\mathrm{and}\quad V(\ttt_2) = \bigcup_{n=0}^N V_n(\ttt_2).
	\end{equation}
	Let $\phi_1$ and $\phi_2$ be the parenting maps for $\ttt_1$ and $\ttt_2$, respectively.
	We shall recursively define maps $\sigma_n:V_n(\ttt_1) \to V_n(\ttt_2)$ satisfying
	\begin{enumerate}[(i)]
		\item\label{Roots} $\nu_1(r) = \nu_2(\sigma_n(r))$ for all $r\in V_n(\ttt)$.
		\item\label{NBijection} $\sigma_n$ is a surjection.
		\item\label{SharingParents} If $n\geq 1$ then $\phi_2(\sigma_n(r)) = \sigma_{n-1}(\phi_1(r))$ for all $r\in V_n(\ttt_1)$.
	\end{enumerate}
	
	By definition of factorization tree, $\ttt_1$ and $\ttt_2$ each have one root vertex $r_0$ and $s_0$, respectively, and $$\nu_1(r_0) = \nu_2(s_0) = (1,1,\ldots).$$  
	Now we may define
	\begin{equation*}
		\sigma_0(r_0) = s_0.
	\end{equation*}
	Condition \eqref{Roots} follows from the fact that $\nu_1(r_0) = \nu_2(s_0)$, while \eqref{NBijection} is trivial and \eqref{SharingParents} is vacuously correct.
	
	Suppose that $\sigma_{n-1}: V_{n-1}(\ttt_1) \to V_{n-1}(\ttt_2)$ satisfies properties \eqref{Roots}, \eqref{NBijection}
	and \eqref{SharingParents} with $n-1$ in place of $n$.  Assume that $r \in V_n(\ttt_1)$ .  By definition of maximal primitive factorization tree, 
	we know that $\nu_1(r) \in \Delta(\nu_1(\phi_1(r)))$.  Since $\phi_1(r) \in V_{n-1}(\ttt_1)$, property \eqref{Roots} gives 
	\begin{equation*}
		\nu_1(r) \in \Delta(\nu_2(\sigma_{n-1}(\phi_1(r)))).
	\end{equation*} 
	By definition of maximal primitive factorization tree, this yields
	\begin{equation*}
		\nu_1(r) \in \mathcal \nu_2(\mathcal C(\sigma_{n-1}(\phi_1(r)))).
	\end{equation*}
	Therefore, there must exist a child $s$ of $\sigma_{n-1}(\phi_1(r))$ such that $\nu_2(s) = \nu_1(r)$.  By condition \eqref{UniqueKids} in the definition of factorization tree,
	we know there is precisely one child $s$ of $\sigma_{n-1}(\phi_1(r))$ such that $\nu_2(s) = \nu_1(r)$.  Therefore, we may define
	\begin{equation*}
		\sigma_n(r) = s.
	\end{equation*}
	Condition \eqref{Roots} follows directly from this definition and also, since $s$ is a child of $\sigma_{n-1}(\phi_1(r))$, we get that
	\begin{equation*}
		\phi_2(\sigma_n(r)) = \phi_2(s) = \sigma_{n-1}(\phi_1(r))
	\end{equation*}
	verifying \eqref{SharingParents}, so it remains only to establish \eqref{NBijection}.
	
	Let $s\in V_n(\ttt_2)$ so that $\phi_2(s) \in V_{n-1}(\ttt_2)$.  We have assumed that $\sigma_{n-1}$ is surjective, so there exists $r'\in V_{n-1}(\ttt_1)$
	such that $\sigma_{n-1}(r') = \phi_2(s)$.  Also, since $s$ is a child of $\phi_2(s)$, we know that $\nu_2(s) \in \Delta(\nu_2(\phi_2(s)))$, and hence
	\begin{equation*}
		\nu_2(s) \in \Delta(\nu_2(\sigma_{n-1}(r'))) = \Delta(\nu_1(r')) = \nu_1(\mathcal C(r')).
	\end{equation*}
	Hence, there must exist a child $r$ of $r'$ such that $\nu_1(r) = \nu_2(s)$, so it follows that $\sigma_n(r) = s$, proving that $\sigma_n$ is surjective.
	
	
	For each $0\leq n\leq N$, we have exhibited the existence of a map $\sigma_n:V_n(\ttt_1) \to V_n(\ttt_2)$ which satisfies \eqref{Roots}, \eqref{NBijection}
	and \eqref{SharingParents} for all $n$.  Since \eqref{Disjoints} are disjoint unions, we may define $\sigma:V(\ttt_1) \to V(\ttt_2)$ by
	\begin{equation*}
		\sigma(r) = \begin{cases} \sigma_0(r) & \mathrm{if}\ r \in V_0(\ttt_1) \\
							\sigma_1(r) & \mathrm{if}\ r \in V_1(\ttt_1) \\
							\vdots & \vdots \\
							\sigma_N(r) & \mathrm{if}\ r \in V_N(\ttt_1). \end{cases}
	\end{equation*}
	It is now straightforward to verify that $\sigma$ is a surjective homomorphism, and it follows from Corollary \ref{UniqueHMCor} that $\sigma$ is an isomorphism.
\end{proof}

Before we are able to  prove Theorem \ref{PrimitiveComplete}, we must establish a series of results concerning the case where $\alpha$ is square-free.
These will be used as part of the proof of \eqref{UniqueSqF}.  The definition of 
subfactorization states that if $\aaa < \bbb$ then there exists a set of factorizations $\aaa_{n+1},\aaa_{n+2},\ldots,\aaa_{m-1}$ of $\alpha_{n+1},\alpha_{n+2},\ldots,\alpha_{m-1}$, 
respectively, such that $$\aaa < \aaa_{n+1} < \aaa_{n+2} < \cdots < \aaa_{m-1} < \bbb.$$  If $\alpha$ is square-free, then this set is unique.

\begin{lem} \label{SubUnique}
	Suppose $\alpha$ is a square-free rational number and $n < m$.  Assume that $\aaa$ and $\bbb$ are factorizations of $\alpha_n$ and $\alpha_m$, respectively, 
	satisfying $\aaa < \bbb$.  There exists a unique set of factorizations $\aaa_{n+1},\aaa_{n+2},\ldots,\aaa_{m-1}$ of $\alpha_{n+1},\alpha_{n+2},\ldots,\alpha_{m-1}$, respectively, 
	such that $\aaa < \aaa_{n+1} < \aaa_{n+2} < \cdots < \aaa_{m-1} < \bbb$.
\end{lem}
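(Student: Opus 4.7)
The plan is to separate existence from uniqueness. Existence of the chain is immediate from the hypothesis $\aaa < \bbb$, since this is precisely what that relation is defined to mean via \eqref{GeneralSub}. All the substance of the lemma therefore lies in the uniqueness claim, which I will prove by a reverse induction that extracts each $\aaa_i$ backward from $\bbb$, with the square-free hypothesis entering at the one key step.

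The crucial observation is the following: if $\alpha$ is square-free and $\bbb$ is a factorization of $\alpha_k$ with $1 \leq k \leq N$, then $\bbb$ has at most one direct subfactorization. To verify this, recall that by the definition of direct subfactorization, any $\aaa' < \bbb$ is obtained from $\bbb$ by removing a single factor of $p_k$ from a specific entry. Thus I need only show that $p_k$ can lie in just one entry of $\bbb = (c_1/d_1, c_2/d_2, \ldots)$. The factorization axioms, and in particular the cross-coprimality condition $\gcd(c_j, d_\ell) = 1$ for all $j, \ell$, force $\prod_j c_j$ to equal the numerator of $\alpha_k$ and $\prod_j d_j$ to equal its denominator, with no cancellation. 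Since $\alpha$ is square-free, so is $\alpha_k$, and consequently each prime dividing $\alpha_k$ divides exactly one $c_j$ or exactly one $d_j$. In particular, $p_k$ sits in a unique entry of $\bbb$, so the direct subfactorization recovered by removing it is unique.

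With that claim in hand, the proof is by downward induction on $i$ running from $m-1$ to $n+1$. Given two chains $\aaa = \aaa_n < \aaa_{n+1} < \cdots < \aaa_{m-1} < \bbb$ and $\aaa = \aaa'_n < \aaa'_{n+1} < \cdots < \aaa'_{m-1} < \bbb$, both $\aaa_{m-1}$ and $\aaa'_{m-1}$ are direct subfactorizations of $\bbb$ and so coincide by the claim. Once $\aaa_{i+1} = \aaa'_{i+1}$ has been established, $\aaa_i$ and $\aaa'_i$ are direct subfactorizations of this common factorization of $\alpha_{i+1}$, and the claim again forces $\aaa_i = \aaa'_i$. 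The main obstacle I anticipate is the bookkeeping within the claim itself, specifically the reduction from the cross-coprimality axiom (iv) to the no-cancellation identities $\mathrm{numerator}(\alpha_k) = \prod_j c_j$ and $\mathrm{denominator}(\alpha_k) = \prod_j d_j$; once these are pinned down, the square-free hypothesis does the remaining work essentially automatically.
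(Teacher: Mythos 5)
Your proposal is correct and mirrors the paper's own argument: both reduce the lemma to the statement that every factorization of $\alpha_k$ has a unique direct subfactorization, both derive this from the fact that square-freeness forces the prime $p_k$ to lie in exactly one entry (the paper simply asserts ``the unique index $k$ such that $p_m\mid c_k$'', while you spell out the cross-coprimality step a bit more), and both finish by chaining backward by induction. The only cosmetic difference is that the paper also invokes the product identity $\prod a_i/b_i = \alpha_n = \prod a'_i/b'_i$ to pin down the modified entry, whereas you can get that directly from $a_k = c_k/p_k$; either way the argument is the same.
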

\begin{proof}
	Suppose $\alpha = a/b$ where $a$ and $b$ are relatively prime positive integers, both square-free.
	It is sufficient to show that every factorization $\bbb$ of $\alpha_m$ has a unique direct subfactorization.  To see this, let
	\begin{equation*} \label{StartingSub}
		\bbb = \left(\frac{c_1}{d_1},\frac{c_2}{d_2},\cdots\right).
	\end{equation*}
	Assume without loss of generality that $p_m$ divides the numerator of $\alpha$ and suppose that $k$ is the unique index such that $p_m\mid c_k$.
	Suppose that $\aaa$ and $\aaa'$ are direct subfactorizations of $\bbb$.  If we let
	\begin{equation*}
		\aaa = \left(\frac{a_1}{b_1},\frac{a_2}{b_2},\cdots\right)\mbox{ and } \aaa' = \left(\frac{a'_1}{b'_1},\frac{a'_2}{b'_2},\cdots\right)
	\end{equation*}
	then by definition of subfactorization, we must have $b_i = d_i = b'_i$ for all $i$ and $a_i = c_i = a'_i$ for all $i\ne k$.
	We also know that
	\begin{equation*}
		\prod_{i=1}^\infty \frac{a_i}{b_i} = \alpha_n = \prod_{i=1}^\infty \frac{a'_i}{b'_i} 
	\end{equation*}
	which now means that $a_k = a'_k$.  We then get that $\aaa = \aaa'$ and the result follows by induction.

\end{proof}

In spite of condition \eqref{UniqueKids} in the definition of factorization tree, it is possible for a factorization tree to contain the same factorization in two distinct vertices.
When $\alpha$ is square-free, this situation cannot occur.

 \begin{lem} \label{Injection}
	If $\alpha$ is a square-free rational number and $\ttt = (T,\nu)$ is a factorization tree for $\alpha$ then $\nu: V(T) \to \mathfrak F_\alpha$ is an injection.
\end{lem}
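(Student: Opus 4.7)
The plan is to induct on the depth of vertices in $T$, using the uniqueness of direct subfactorizations provided (implicitly) by Lemma \ref{SubUnique}, together with condition \eqref{UniqueKids} in the definition of factorization tree.

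First I would observe the following consequence of the proof of Lemma \ref{SubUnique}: when $\alpha$ is square-free, every factorization $\bbb$ of $\alpha_m$ (with $m \geq 1$) admits a unique direct subfactorization. This is essentially the first paragraph of the proof of Lemma \ref{SubUnique}, which relies on the fact that $p_m$ divides exactly one entry of $\bbb$ because $\alpha$ is square-free.

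Next I would set up the induction. For each $n \geq 0$, let $V_n \subseteq V(T)$ denote the set of vertices $r$ such that $\nu(r)$ is a factorization of $\alpha_n$. By property \eqref{ParentRelations}, every non-root $r$ has $\nu(\phi(r))$ a factorization of $\alpha_{n-1}$ whenever $\nu(r)$ is a factorization of $\alpha_n$; combined with property \eqref{RootVertex}, this shows that $V_n$ consists precisely of the vertices at depth $n$ in $T$. The inductive claim is: $\nu$ restricted to $V_n$ is injective. The base case $n = 0$ is immediate since $T$ has a unique root vertex.

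For the inductive step, suppose $r, s \in V_n$ with $\nu(r) = \nu(s)$. Then $\nu(\phi(r))$ and $\nu(\phi(s))$ are both direct subfactorizations of $\nu(r) = \nu(s)$ by property \eqref{ParentRelations}, so by the uniqueness of direct subfactorizations in the square-free case we obtain $\nu(\phi(r)) = \nu(\phi(s))$. Since $\phi(r), \phi(s) \in V_{n-1}$, the inductive hypothesis gives $\phi(r) = \phi(s)$. Finally, $r$ and $s$ are children of a common parent with identical content, so property \eqref{UniqueKids} forces $r = s$.

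There is no real obstacle here once Lemma \ref{SubUnique} is in place; the only subtlety is ensuring that the two vertices $r$ and $s$ lie at the same depth (so that the induction applies simultaneously to both), which is guaranteed because equal factorizations are factorizations of the same $\alpha_n$, and vertices containing factorizations of $\alpha_n$ necessarily lie at depth $n$.
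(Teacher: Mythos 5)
Your proof is correct and takes essentially the same approach as the paper: both rely on the uniqueness of direct subfactorizations in the square-free case (the content of Lemma \ref{SubUnique}) combined with condition \eqref{UniqueKids}, and both argue by induction from the root outward. The only cosmetic difference is that the paper first establishes $\nu(r_i) = \nu(s_i)$ along the two root-to-vertex chains all at once and then runs a downward induction to equate the vertices, whereas you fold those two passes into a single induction on depth.
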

\begin{proof}[Proof of Theorem \ref{Injection}]
	Suppose that $r,s\in V(T)$ and $\nu(r) = \nu(s)$.
	Let $r_0 = s_0$ be the root vertex of $\mathcal T$, $r_n = r$, and $s_n = s$.  Further assume that $\{r_0,r_1,\ldots,r_n\}$ and $\{s_0,s_1,\ldots,s_n\}$ are finite sequences
	of vertices of $\mathcal T$ such that $r_i$ and $s_i$ are parents of $r_{i+1}$ and $s_{i+1}$, respectively, for all $0 \leq i < n$.  By \eqref{ParentRelations} in the definition
	of factorization tree, we must have that
	\begin{equation*}
		\nu(r_0) < \nu(r_1) < \cdots < \nu(r_n)\quad\mathrm{and}\quad \nu(s_0) < \nu(s_1) < \ldots < \nu(s_n).
	\end{equation*}
	By Lemma \ref{SubUnique}, we know that $\nu(r_i) = \nu(s_i)$ for all $0\leq i \leq n$.
	
	We now prove by induction on $i$ that $r_i = s_i$ for all $i$.  Our assumption is that $r_0 = s_0$, so the base case is clear.  Now assume that $t = s_i = r_i$ for some $i < n$.
	We know that $s_{i+1}$ and $r_{i+1}$ are both children of $t$ such that $\nu(s_{i+1}) = \nu(r_{i+1})$.  By condition \eqref{UniqueKids}, it follows that
	$s_{i+1} = r_{i+1}$.  In particular, we have shown that $r=s$ as required.
\end{proof}

Assume that $\alpha$ is square-free and
\begin{equation*}
	\aaa = \left(\frac{a_1}{b_1},\cdots,\frac{a_{k-1}}{b_{k-1}},\frac{a_kp_n}{b_k},\frac{a_{k+1}}{b_{k+1}},\cdots\right)
\end{equation*}
is a factorization of $\alpha_n$ for some $n$.  In this case, Lemma \ref{Injection} implies that the content map
$\nu$ for a factorization tree $\ttt$ is one-to-one, so if $\aaa$ belongs to the image of $\nu$, we obtain
\begin{equation} \label{ParentFind}
	(\nu\phi\nu^{-1})(\aaa) = \left(\frac{a_1}{b_1},\cdots,\frac{a_{k-1}}{b_{k-1}},\frac{a_k}{b_k},\frac{a_{k+1}}{b_{k+1}},\cdots\right).
\end{equation}
We have an analogous observation in the case where $p_n$ divides the denominator of $\alpha_n$.

In general, there exist non-isomorphic factorization trees $\ttt_1$ and $\ttt_2$ satisfying $\nu_1(V(\ttt_1)) = \nu_2(V(\ttt_2))$.  
However, when $\alpha$ is  square-free, this equality is enough to conclude that $\ttt_1\cong\ttt_2$.

\begin{thm} \label{IsomTrees}
	Suppose that $\ttt_1 = (T_1,\nu_1)$ and $\ttt_2 = (T_2,\nu_2)$ are factorization trees for the square-free rational number $\alpha$.  Then $\nu_1(V(\ttt_1)) = \nu_2(V(\ttt_2))$ 
	if and only if $\ttt_1 \cong \ttt_2$.  In this case, $\nu_2^{-1}\nu_1:V(\ttt_1)\to V(\ttt_2)$ defines an isomorphism.
\end{thm}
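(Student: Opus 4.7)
The plan is to leverage the injectivity of the content map in the square-free case (Lemma \ref{Injection}) to build the candidate isomorphism $\sigma = \nu_2^{-1}\nu_1$ explicitly, then verify it satisfies the hypotheses of Theorem \ref{TreeHM} and Corollary \ref{UniqueHMCor}. The forward direction is essentially formal: if $\ttt_1\cong\ttt_2$ via some isomorphism $\sigma$, then content-preservation gives $\nu_2(\sigma(r)) = \nu_1(r)$ for all $r$, and the bijectivity of $\sigma$ yields $\nu_2(V(\ttt_2)) = \nu_2(\sigma(V(\ttt_1))) = \nu_1(V(\ttt_1))$.

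For the reverse direction, I first note that Lemma \ref{Injection} ensures both $\nu_1$ and $\nu_2$ are injective onto $\nu_1(V(\ttt_1)) = \nu_2(V(\ttt_2))$, so $\sigma := \nu_2^{-1}\nu_1$ is a well-defined bijection from $V(T_1)$ to $V(T_2)$. Content-preservation of $\sigma$ is immediate: $\nu_2(\sigma(r)) = \nu_2(\nu_2^{-1}(\nu_1(r))) = \nu_1(r)$. It remains to check the parent-preserving condition from Theorem \ref{TreeHM}, namely that $\sigma(\phi_1(r)) = \phi_2(\sigma(r))$ for every $r\in V^*(T_1)$, where $\phi_1,\phi_2$ are the parenting maps of $\ttt_1,\ttt_2$.

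The key step, and the crux of the argument, is this parent-preservation, and it is exactly where the square-free hypothesis earns its keep. Suppose $\nu_1(r)$ is a factorization of $\alpha_n$. By condition \eqref{ParentRelations} of a factorization tree, $\nu_1(\phi_1(r))$ is a direct subfactorization of $\nu_1(r)$, and likewise $\nu_2(\phi_2(\sigma(r)))$ is a direct subfactorization of $\nu_2(\sigma(r)) = \nu_1(r)$. Lemma \ref{SubUnique} (applied with gap $m - n = 1$) guarantees that a factorization of $\alpha_n$ has a \emph{unique} direct subfactorization in the square-free setting; equivalently, the formula \eqref{ParentFind} determines this predecessor intrinsically from the factorization, independent of which tree it lives in. Therefore $\nu_1(\phi_1(r)) = \nu_2(\phi_2(\sigma(r)))$, and applying $\nu_2^{-1}$ to both sides yields $\sigma(\phi_1(r)) = \phi_2(\sigma(r))$, as desired.

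Having verified that $\sigma$ is a bijective homomorphism, Corollary \ref{UniqueHMCor} immediately upgrades it to an isomorphism, establishing $\ttt_1 \cong \ttt_2$ and simultaneously identifying the isomorphism as $\nu_2^{-1}\nu_1$. The main obstacle I anticipate is keeping the bookkeeping straight when invoking Lemma \ref{SubUnique}: one must confirm that a direct subfactorization is literally the $m - n = 1$ case of the lemma, and that Lemma \ref{Injection} applies to both trees so that $\nu_2^{-1}$ on the image genuinely makes sense. Beyond that, everything is routine diagram-chasing.
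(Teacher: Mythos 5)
Your proof is correct and follows essentially the same route as the paper: both reduce to verifying parent-preservation, establish it by exploiting the uniqueness of the direct subfactorization in the square-free case (the paper encodes this as formula \eqref{ParentFind}), and then invoke Corollary \ref{UniqueHMCor} to conclude. The only small imprecision is the citation of Lemma \ref{SubUnique} ``applied with gap $m-n=1$,'' which is vacuous as literally stated; the uniqueness of a direct subfactorization is what the \emph{proof} of Lemma \ref{SubUnique} establishes (and what \eqref{ParentFind}, which you also cite, records), so the substance is fine.
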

\begin{proof}
	If $\ttt_1 \cong \ttt_2$ then we obtain $\nu_1(V(\ttt_1)) = \nu_2(V(\ttt_2))$ directly from the definition of isomorphism.  Hence, we assume that 
	$\nu_1(V(\ttt_1)) = \nu_2(V(\ttt_2))$.  By Lemma \ref{Injection}, we know that both $\nu_1$ and $\nu_2$ are one-to-one, so we
	may define $\sigma(r) = \nu_2^{-1}(\nu_1(r))$.   We claim that $\sigma$ is an isomorphism.  
	
	Clearly $\nu_1(r) = \nu_2(\sigma(r))$ and $\sigma$ is a surjection, so by Corollary \ref{UniqueHMCor}, it remains only to verify condition \eqref{ParentPreserving}
	of Theorem \ref{TreeHM}.  
	By Lemma \ref{Injection}, we know that $\nu_2$ is one-to-one, so it is enough to show that
	\begin{equation} \label{IsomEquality}
		(\nu_2\phi_2\sigma)(r) = (\nu_2\sigma\phi_1)(r) \mbox{ for all } r\in V^*(\ttt_1).
	\end{equation}
	Assume that $\nu_1(r)$ is a factorization of $\alpha_n$, and therefore, without loss of generality, we may write
	\begin{equation} \label{NurLabel}
		\nu_1(r) = \left(\frac{a_1}{b_1},\cdots,\frac{a_{k-1}}{b_{k-1}},\frac{a_kp_n}{b_k},\frac{a_{k+1}}{b_{k+1}},\cdots\right).
	\end{equation}
	This gives
	\begin{equation*}
		(\nu_2\phi_2\sigma)(r) = (\nu_2\phi_2\nu_2^{-1})(\nu_1(r)) = (\nu_2\phi_2\nu_2^{-1})
			\left(\frac{a_1}{b_1},\cdots,\frac{a_{k-1}}{b_{k-1}},\frac{a_kp_n}{b_k},\frac{a_{k+1}}{b_{k+1}},\cdots\right),
	\end{equation*}
	and applying \eqref{ParentFind}, we get
	\begin{equation} \label{FirstHalf}
		(\nu_2\phi_2\sigma)(r) = \left(\frac{a_1}{b_1},\cdots,\frac{a_{k-1}}{b_{k-1}},\frac{a_k}{b_k},\frac{a_{k+1}}{b_{k+1}},\cdots\right).
	\end{equation}
	On the other hand, we obtain from \eqref{NurLabel} that
	\begin{equation*}
		(\nu_2\sigma\phi_1)(r) = (\nu_1\phi_1)(r) = (\nu_1\phi_1\nu_1^{-1}) \left(\frac{a_1}{b_1},\cdots,\frac{a_{k-1}}{b_{k-1}},\frac{a_kp_n}{b_k},\frac{a_{k+1}}{b_{k+1}},\cdots\right).
	\end{equation*}
	Again using \eqref{ParentFind}, we find that
	\begin{equation*}
		(\nu_2\sigma\phi_1)(r) =  \left(\frac{a_1}{b_1},\cdots,\frac{a_{k-1}}{b_{k-1}},\frac{a_k}{b_k},\frac{a_{k+1}}{b_{k+1}},\cdots\right).
	\end{equation*}
	Combining this equality with \eqref{FirstHalf}, we obtain \eqref{IsomEquality} as required.
\end{proof}

In addition to the above facts regarding square-free numbers, the proof of Theorem \ref{PrimitiveComplete} will require an important property of $\Delta$.

\begin{lem}\label{PrimitiveDelta}
	Suppose $\alpha$ is a rational number and that $\aaa$ and $\bbb$ are factorizations of $\alpha_n$ and $\alpha_{n-1}$, respectively, for some $0 < n \leq N$.
	If $\bbb$ is a direct subfactorization of $\aaa$ and $\aaa$ is primitive then $\bbb$ is primitive and $\aaa\in \Delta(\bbb)$.
\end{lem}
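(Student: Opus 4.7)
The plan is to unwrap the definitions of direct subfactorization and of $\Delta$, then analyze the single entry at which $\aaa$ and $\bbb$ differ. By symmetry I treat only the case where $p_n$ divides the numerator of $\alpha$; the denominator case is identical with numerators and denominators swapped. Writing $\bbb = (a_i/b_i)$ and $\aaa = (c_i/d_i)$, the hypothesis $\bbb < \aaa$ furnishes a unique index $k$ with $c_k = a_k p_n$, $d_k = b_k$, and $(c_i,d_i) = (a_i,b_i)$ for all $i \ne k$.

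First I verify $\bbb$ is primitive. For $i \ne k$ this is immediate from primitivity of $\aaa$. For $i = k$, I split on whether $c_k < d_k$ or $c_k \geq d_k$. If $c_k < d_k$, then $d_k = \max\{c_k,d_k\}$ is prime (it cannot be $1$, since $c_k \geq p_n \geq 2$), and $a_k p_n = c_k < d_k = b_k$ gives $\max\{a_k,b_k\} = b_k$ prime. If $c_k \geq d_k$, then $c_k$ is prime and divisible by $p_n$, forcing $c_k = p_n$, hence $a_k = 1$ and $b_k = d_k \leq p_n$. The main step here is to rule out $1 < b_k \leq p_n$: every prime factor of $b_k$ divides the denominator of $\alpha_{n-1}$, so equals some $p_i$ with $i < n$ and $\gamma(i) = -1$, hence is at least $p_{n-1} \geq p_n$; moreover, condition (iv) applied to $\aaa$ yields $\gcd(p_n, d_k) = 1$, so no prime factor of $b_k$ can equal $p_n$. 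Thus every prime factor of $b_k$ strictly exceeds $p_n$, and $b_k > 1$ would contradict $b_k \leq p_n$. Hence $b_k = 1$ and $\max\{a_k,b_k\} = 1$.

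Next I show $\aaa \in \Delta(\bbb) = \delta(\bbb) \cup \epsilon(\bbb)$. In the sub-case $c_k < d_k$, the inequality $a_k p_n = c_k < d_k = b_k$ places $\aaa$ directly in $\delta(\bbb)$ by definition. In the remaining sub-case, $a_k = b_k = 1$ and $\aaa$'s $k$-th entry is $p_n/1$. Let $\ell$ denote the largest index at which $\bbb$ has a nontrivial entry, with $\ell = 0$ if $\bbb$ is trivial. Condition (iii) applied to $\bbb$ forces $k > \ell$, and condition (iii) applied to $\aaa$ at position $k$ forces $\max\{c_{k-1},d_{k-1}\} \geq p_n > 1$ whenever $k > 1$, hence $k - 1 \leq \ell$. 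Combining, $k = \ell + 1$, so $\aaa$ matches the defining form of the unique element of $\epsilon(\bbb)$.

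The main obstacle is the sub-case $c_k \geq d_k$: identifying $\aaa$ with $\epsilon(\bbb)$ requires simultaneously exploiting the monotonicity condition (iii) on both $\bbb$ and $\aaa$ together with the constraint that prime factors of $\bbb$'s denominator are all strictly greater than $p_n$, in order to force first $b_k = 1$ and then $k = \ell + 1$.
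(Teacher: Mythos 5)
Your proof is correct and follows essentially the same approach as the paper's: both hinge on the observation that every prime dividing a denominator entry of $\bbb$ exceeds $p_n$, and both reduce to the same dichotomy. The paper splits on whether the differing index lies within the nontrivial length $\ell$ of $\bbb$ or at position $\ell+1$ and shows the first case forces $a_kp_n<b_k$, while you split directly on the inequality $a_kp_n<b_k$ and recover the positional information; these are contrapositive reformulations of one another.
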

\begin{proof}
	Assume that 
	\begin{equation} \label{NewStart}
		\bbb = \left( \frac{a_1}{b_1},\frac{a_2}{b_2},\cdots,\frac{a_\ell}{b_\ell}\right)
	\end{equation}
	where $a_\ell/b_\ell \ne 1$.  Assume without loss of generality that $p_{n}$ divides the numerator of $\alpha$.  We certainly have that
	\begin{enumerate}[(i)]
		\item If $p\mid a_i$ for some $i$ then $p\geq p_{n}$.
		\item\label{BBound} If $p\mid b_i$ for some $i$ then $p > p_{n}$.
	\end{enumerate}
	As $\bbb$ is a direct subfactorization of $\aaa$, $\aaa$ must have the form
	\begin{equation*}
		\aaa = \left(\frac{a_1}{b_1},\cdots,\frac{a_{k-1}}{b_{k-1}},\frac{a_kp_{n}}{b_k},\frac{a_{k+1}}{b_{k+1}},\cdots,\frac{a_\ell}{b_\ell}\right)
	\end{equation*}
	for some $k$ or 
	\begin{equation*}
		\aaa = \left( \frac{a_1}{b_1},\frac{a_2}{b_2},\cdots,\frac{a_\ell}{b_\ell},\frac{p_{n}}{1}\right).
	\end{equation*}
	To prove both claims, it is now sufficient to show that $a_kp_{n} < b_k$.  If this inequality fails, then since $\aaa$ is primitive, we must have $a_k=1$.
	We cannot have $b_k=1$ because this would contradict our assumption that \eqref{NewStart} is a factorization with $a_\ell/b_\ell \ne 1$.
	Therefore, $b_k > 1$ and \eqref{BBound} implies that $b_k > p_{n} = a_kp_{n}$, a contradiction.
\end{proof}

Equipped with the above results, we are now prepared to continue with the proof of Theorem \ref{PrimitiveComplete}.

\begin{proof}[Proof of Theorem \ref{PrimitiveComplete}]
	The first assertion follows inductively from the fact that $(1,1,\ldots)$ is primitive and, if $\aaa$ is primitive then $\Delta(\aaa)$ contains only primitive factorizations.
	This means that $\nu(V(P)) \subseteq \mathfrak P_\alpha$.
	
	To establish \eqref{PrimsAllOver}, it remains to show that $\mathfrak P_\alpha\subseteq \nu(V(P))$, so assume that $\aaa$ is a primitive factorization of $\alpha_n$. 
	We shall again use induction on $n$ to prove that $\aaa\in \nu(V(P))$.
	If $n=0$, then $\aaa$ is a factorization of $\alpha_0$ so that $\aaa = (1,1,\ldots)$ and the base case follows directly from the definition of factorization tree.
			
	Now assume that for every primitive factorization $\bbb$ of $\alpha_{n-1}$, we have $\bbb\in \nu(V(P))$.  Since $\aaa$ is a factorization of $\alpha_n$, we may write,
	without loss of generality, 
	\begin{equation*}
		\aaa =  \left(\frac{a_1}{b_1},\cdots,\frac{a_{k-1}}{b_{k-1}},\frac{a_kp_{n}}{b_k},\frac{a_{k+1}}{b_{k+1}},\cdots\right).
	\end{equation*}
	We also make the following assumptions without loss of generality directly from the definition of factorization.
	\begin{enumerate}[(a)]
		\item\label{SmallNum} If $p$ is any prime dividing $a_i$ for some $i$, then $p \geq p_n$. 
		\item\label{SmallDen} If $p$ is any prime dividing $b_i$ for some $i$, then $p > p_n$.
		\item $p_n$ does not divide $a_i$ or $b_i$ for any $i> k$.
		\item\label{Decreasing2} $\max\{a_kp_n,b_k\} \geq \max\{a_{k+1},b_{k+1}\}$.
	\end{enumerate}
	Now we let
	\begin{equation*}
		\bbb = \left(\frac{a_1}{b_1},\cdots,\frac{a_{k-1}}{b_{k-1}},\frac{a_k}{b_k},\frac{a_{k+1}}{b_{k+1}},\cdots\right).
	\end{equation*}
	To see that $\bbb$ is a factorization of $\alpha_{n-1}$, we note that all axioms are trivial except \eqref{Decreasing}, for which we need only show that
	$\max\{a_k,b_k\} \geq \max\{a_{k+1},b_{k+1}\}$.  If $b_k > p_na_k$, then this inequality simply follows from \eqref{Decreasing2}.
	Otherwise $p_na_k > b_k$, and since $\aaa$ is primitive, $a_k = 1$.  Since $p_n$ is strictly smaller than any prime dividing $b_k$, we get $b_k = 1$ as well.
	Therefore, \eqref{Decreasing2} gives $p_n \geq  \max\{a_{k+1},b_{k+1}\}$.  However, $p_n$ cannot divide either $a_{k+1}$ or $b_{k+1}$.  Combining these observations,
	if $q$ is a prime dividing $a_{k+1}$ or $b_{k+1}$, we must have $q < p_n$.  This contradicts \eqref{SmallNum} and \eqref{SmallDen} and forces
	$a_{k+1} = b_{k+1} = 1$, verifying the desired inequality.
	
	Using Lemma \ref{PrimitiveDelta}, we deduce that $\bbb$ is primitive.
	Hence, the inductive hypothesis implies that $\bbb\in \nu(V(P))$ and there must exist a vertex $r$ of $\ppp_\alpha$ such that $\nu(r) = \bbb$.
	In addition, Lemma \ref{PrimitiveDelta} asserts that $\aaa\in \Delta(\bbb)$.  Hence $\aaa\in \Delta(\nu(r))$, and by definition of maximal primitive factorization tree, 
	$\aaa\in \nu(\mathcal C(r)) \subseteq \nu(V(P))$.
	
	To show \eqref{CPFTBiggest}, let $\phi_1$ and $\phi_2$ be the parenting maps for $\ttt$ and $\ppp_\alpha$, respectively.
	We follow the proof of Theorem \ref{CPFTIso} very closely and use the same notation.
	In this case, we shall recursively define maps $\sigma_n:V_n(\ttt) \to V_n(\ppp_\alpha)$ satisfying
	\begin{enumerate}[(i)]
		\item\label{Rooties} $\nu_1(r) = \nu_2(\sigma_n(r))$ for all $r\in V_n(\ttt)$.
		\item\label{SharingParents2} If $n\geq 1$ then $\phi_2(\sigma_n(r)) = \sigma_{n-1}(\phi_1(r))$ for all $r\in V_n(\ttt)$.
	\end{enumerate}
	Letting $r_0$ and $s_0$ be the root vertices of $\ttt$ and $\ppp_\alpha$, respectively, we define $\sigma_0(r_0) = s_0$ exactly as in the proof of Theorem \ref{CPFTIso}.
	Both required conditions hold for $\sigma_0$.
	
	Assume that $\sigma_{n-1}: V_{n-1}(\ttt) \to V_{n-1}(\ppp_\alpha)$ satisfies properties \eqref{Rooties}
	and \eqref{SharingParents2} with $n-1$ in place of $n$.  Let $r \in V_n(\ttt)$.  
	By Lemma \ref{PrimitiveDelta}, we know that $\nu_1(r) \in \Delta(\nu_1(\phi_1(r)))$.   Since $\phi_1(r) \in V_{n-1}(\ttt)$, property \eqref{Rooties} gives 
	\begin{equation*}
		\nu_1(r) \in \Delta(\nu_2(\sigma_{n-1}(\phi_1(r)))).
	\end{equation*} 
	By definition of $\ppp_\alpha$, we have $\Delta(\nu_2(\sigma_{n-1}(\phi_1(r)))) = \nu_2(\mathcal C(\sigma_{n-1}(\phi_1(r))))$, and therefore
	\begin{equation*}
		\nu_1(r) \in \mathcal \nu_2(\mathcal C(\sigma_{n-1}(\phi_1(r)))).
	\end{equation*}
	Hence, there must exist a child $s$ of $\sigma_{n-1}(\phi_1(r))$ such that $\nu_2(s) = \nu_1(r)$.  
	By \eqref{UniqueKids} in the definition of factorization tree, there is precisely one child $s$ of $\sigma_{n-1}(\phi_1(r))$ such that $\nu_2(s) = \nu_1(r)$, and we define
	\begin{equation*}
		\sigma_n(r) = s.
	\end{equation*}
	Conditions \eqref{Rooties} and \eqref{SharingParents2} now follow using proofs identical to those used in the proof of Theorem \ref{CPFTIso}.
	$\sigma$ is also constructed in the same way.  The remaining assertions follow from Theorem \ref{UniqueHMThm}.
	
	Now we establish \eqref{UniqueSqF}.  In view of Theorem \ref{IsomTrees}, it is enough to show that $\nu_1(V(\ttt)) = \nu_2(V(\ppp_\alpha))$.
	By \eqref{CPFTBiggest}, there exists a homomorphism $\sigma:V(\ttt) \to V(\ppp_\alpha)$ which means that
	\begin{equation*}
		\nu_1(V(\ttt)) = \nu_2(\sigma(V(\ttt))) \subseteq \nu_2(V(\ppp_\alpha)).
	\end{equation*}
	Now assume that there exists a factorization $\aaa$ of $\alpha_n$, for some $n$, such that
	\begin{equation} \label{NewA}
		\aaa \in \nu_2(V(\ppp_\alpha)) \setminus \nu_2(\sigma(V(\ttt))).
	\end{equation}
	Let $r\in V(\ppp_\alpha)$ be such that $\nu_2(r) = \aaa$ and suppose $s$ is a descendant of $r$ in $\ppp_\alpha$ such that $\nu_2(s)$ is a factorization of $\alpha$.
	Therefore, we may assume that
	\begin{equation*}
		\phi_2^k(s) = r
	\end{equation*}
	for some positive integer $k$ with $0\leq k\leq N$.  By our assumptions, we know there exists a vertex $t$ of $\ttt$ such that $\nu_1(t) = \nu_2(s)$.
	Since $\sigma$ is a homomorphism, we conclude that $\nu_2(\sigma(t)) = \nu_2(s)$.  By Theorem \ref{Injection}, we know that $\nu_2$ is an injection,
	and hence, $\sigma(t) = s$.  It now follows that
	\begin{equation*}
		\phi_2^k(\sigma(t)) = \phi_2^k(s) = r,
	\end{equation*}
	and by using Theorem \ref{TreeHM} \eqref{ParentPreserving}, we obtain that
	\begin{equation*}
		\sigma(\phi_1^k(t)) = r.
	\end{equation*}
	But $\phi_1^k(t)\in V(\ttt)$ which contradicts \eqref{NewA}.
\end{proof}

\subsection{Optimal Factorization Trees}

We first must establish that every optimal factorization is primitive.

\begin{proof}[Proof of Theorem \ref{EOFPrim}]
	Assume that $\aaa =  \left(a_1/b_1,a_2/b_2,\ldots\right)$ is an optimal factorization of $\alpha$ which fails to be primitive.  
	Hence, there exists $k$ such that $\max\{a_k,b_k\}$ is composite, and assume without loss of generality that $a_k > b_k$.  
	Then write $a_k = cd$, where $c$ and $d$ are integers greater than $1$, and observe that
	\begin{equation} \label{AnonFact}
		\alpha =  \frac{a_1}{b_1} \cdots \frac{a_{k-1}}{b_{k-1}}\cdot \frac{c}{b_k}\cdot\frac{d}{1}\cdot\frac{a_{k+1}}{b_{k+1}}\cdots.
	\end{equation}
	It is straightforward to check that
	\begin{equation*}
		\limsup_{t\to \infty}\left( m\left(\frac{c}{b_k}\right)^t + m\left(d\right)^t\right)^{1/t} < m\left( \frac{a_k}{b_k}\right),
	\end{equation*}
	which means that there exists $T > 0$ such that
	\begin{equation*}
		m\left(\frac{c}{b_k}\right)^t + m\left(d\right)^t < m\left( \frac{a_k}{b_k}\right)^t
	\end{equation*}
	for all $t\geq T$.  Along with \eqref{AnonFact}, this contradicts our assumption that $\aaa$ is optimal.
\end{proof}

The proof of Theorem \ref{COFTIso} is very similar to that of Theorem \ref{CPFTIso} so we do not include it here.
The proof of Theorem \ref{OptimalOptimal} is based on a result which describes ancestors of optimal factorizations in an arbitrary primitive factorization tree.  
This result will require us to establish a lemma. 

\begin{lem} \label{Subs}
	Suppose $\alpha$ is a rational number and $\nu$ is the content map for $\ppp_\alpha$.  Assume that $r$ and $s$ are vertices of $\ppp_\alpha$ with $s$ a descendant of $r$.  
	Further assume that
	\begin{equation*}
		\nu(r) = \left(\frac{a_1}{b_1},\frac{a_2}{b_2},\ldots,\frac{a_\ell}{b_\ell},1,1,\ldots\right)
	\end{equation*}
	where $a_\ell/b_\ell \ne 1$ and 
	\begin{equation*}
		\nu(s) = \left(\frac{c_1}{d_1},\frac{c_2}{d_2},\ldots\right).
	\end{equation*}
	Then
	\begin{equation*}
		m\left( \frac{a_i}{b_i}\right) = m\left( \frac{c_i}{d_i}\right)
	\end{equation*}
	for all $1\leq i\leq \ell$.
\end{lem}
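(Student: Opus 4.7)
The plan is to induct on the distance in $\ppp_\alpha$ from $r$ to $s$. The base case $s = r$ is immediate. For the inductive step, let $s^\dagger$ denote the parent of $s$. Since $s$ is a descendant of $r$ different from $r$, and $s^\dagger$ is a descendant of $r$ at one smaller distance, the inductive hypothesis applies to $s^\dagger$. Writing
\begin{equation*}
\nu(s^\dagger) = \left(\frac{c'_1}{d'_1},\frac{c'_2}{d'_2},\ldots\right),
\end{equation*}
we may assume that $m(a_i/b_i) = m(c'_i/d'_i)$ for all $1\leq i \leq \ell$, and the task is to show the same equality with $c'_i/d'_i$ replaced by the entries $c_i/d_i$ of $\nu(s)$.

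By definition of $\ppp_\alpha$, $\nu(s)\in \Delta(\nu(s^\dagger)) = \delta(\nu(s^\dagger))\cup \epsilon(\nu(s^\dagger))$, so I would split into these two cases. If $\nu(s)\in \delta(\nu(s^\dagger))$, then $\nu(s)$ agrees with $\nu(s^\dagger)$ in every coordinate except one index $k$, where the entry $c'_k/d'_k$ is replaced either by $c'_kp_{n+1}/d'_k$ with $c'_kp_{n+1} < d'_k$, or by $c'_k/(d'_kp_{n+1})$ with $d'_kp_{n+1} < c'_k$. In either case the maximum of numerator and denominator, and hence the measure, is unchanged, while every other coordinate is literally identical. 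Thus $m(c_i/d_i) = m(c'_i/d'_i)$ for all $i$, and the inductive hypothesis finishes this case.

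The slightly more delicate case is $\nu(s)\in \epsilon(\nu(s^\dagger))$, where $\nu(s)$ is obtained from $\nu(s^\dagger)$ by appending a new entry $p_{n+1}/1$ or $1/p_{n+1}$ immediately after the last non-trivial entry of $\nu(s^\dagger)$, say at position $\ell'+1$. Here it must be verified that $\ell'\geq \ell$, since otherwise appending at position $\ell' + 1$ could overwrite or perturb one of the first $\ell$ coordinates. This is the only step requiring real attention, and it follows directly from the inductive hypothesis: since $a_\ell/b_\ell \ne 1$, we have $m(a_\ell/b_\ell) > 0$, hence $m(c'_\ell/d'_\ell) > 0$, forcing $c'_\ell/d'_\ell \ne 1$ and therefore $\ell' \geq \ell$. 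Granted this, the first $\ell$ coordinates of $\nu(s)$ coincide entrywise with those of $\nu(s^\dagger)$, and combining with the inductive hypothesis completes the proof. I expect this bookkeeping about $\ell'$ to be the main, though mild, obstacle; the bulk of the proof is simply the observation that each generator of $\Delta$ either preserves the maximum of an existing entry or introduces a new entry past all current non-trivial ones.
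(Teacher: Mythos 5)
Your proof is correct and follows essentially the same approach as the paper's: reduce to a single parent-to-child step and split on whether $\nu(s)\in\delta(\nu(s^\dagger))$ or $\nu(s)\in\epsilon(\nu(s^\dagger))$, noting that a $\delta$-move preserves the entrywise maxima and an $\epsilon$-move only appends past the existing nontrivial entries. Your explicit verification that $\ell'\geq\ell$ via the positivity of $m(c'_\ell/d'_\ell)$ is a detail the paper leaves implicit in its chaining step, but both arguments are the same in substance.
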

\begin{proof}
	We first establish the lemma under the additional assumption that $s$ is a child of $r$.  We may assume that $\nu(r)$ is a factorization of $\alpha_n$ so that
	$\nu(s)$ is a factorization of $\alpha_{n+1}$.  As usual, we assume without loss of generality that $p_{n+1}$ divides the numerator of $\alpha$.
	The definition of $\ppp_\alpha$ gives $\nu(s)\in \Delta(\nu(r))$.
	Therefore, we either have
	\begin{equation*}
		\nu(s) = \left(\frac{a_1}{b_1},\dots,\frac{a_{k-1}}{b_{k-1}},\frac{a_kp_{n+1}}{b_k},\frac{a_{k+1}}{b_{k+1}},\cdots,\frac{a_\ell}{b_\ell},1,1,\ldots\right)
			\mbox{  for some } 1\leq k\leq \ell
	\end{equation*}
	or
	\begin{equation*}
		\nu(s) =  \left( \frac{a_1}{b_1},\frac{a_2}{b_2},\ldots, \frac{a_\ell}{b_\ell},\frac{p_{n+1}}{1},1,1,\cdots\right)
	\end{equation*}
	In the second case, clearly the desired result holds.  In the first case, then $a_kp_{n+1} < b_k$ by the definition of $\delta(\nu(s))$, so the result holds as well.

	Suppose that 
	\begin{equation*}
		r = s_0,  s_1, s_2, \ldots s_{k-1}, s_k = s
	\end{equation*}
	are vertices of $\ttt$ such that $s_i$ is a parent of $s_{i+1}$ for all $i$.  The result now follows by induction on $i$.
	
\end{proof}

Suppose $\alpha$ is an arbitrary rational number and $n$ a positive integer with $0\leq n\leq N$.  We say that $n$ is a {\it separation index for $\alpha$} if
\begin{equation*}
	1 \in \left\{ \left|\alpha_n\right|_p, \left|\alpha/\alpha_n\right|_p\right\}\mbox{ for all primes } p.
\end{equation*}
It follows directly from this definition that $n$ is a separation index for $\alpha$ if and only if $p_n > p$ for every prime $p$ dividing the numerator or denominator
of $\alpha/\alpha_n$.  If $\ttt$ is a factorization tree for $\alpha$ and $r$ is a vertex of $\ttt$, we say that $r$ is a {\it separation vertex of $\ttt$} if
$\nu(r)$ is a factorization of $\alpha_n$ for some separation index $n$.  

\begin{thm} \label{OptimalAncestorsSep}
	Suppose $\alpha$ is a  rational number and $\ttt = (T,\nu)$ is a primitive factorization tree for $\alpha$ with parenting map $\phi$.	
	Let $r$ be an optimal vertex of $\ttt$.  If $s$ is a separation vertex of $\ttt$ and an ancestor of $r$ then $s$ is also optimal.
\end{thm}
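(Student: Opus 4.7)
The plan is to prove the contrapositive: assuming that $\nu(s)$ is not an optimal factorization of $\alpha_n$, I will exhibit a factorization of $\alpha$ whose $m_t$-value is strictly smaller than $m_t(\nu(r))$ for all sufficiently large $t$, contradicting the assumed optimality of $\nu(r)$.

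First, by Theorem~\ref{PrimitiveComplete}\eqref{CPFTBiggest}, the injective edge-preserving homomorphism from $\ttt$ into $\ppp_\alpha$ carries $s$ to a separation-vertex ancestor of the image of $r$ with identical contents, so I may replace $\ttt$ by $\ppp_\alpha$. In $\ppp_\alpha$, Lemma~\ref{Subs} applies directly: writing $\nu(s) = (a_1/b_1,\dots,a_\ell/b_\ell,1,\dots)$ with $a_\ell/b_\ell \neq 1$ and $\nu(r) = (c_1/d_1,c_2/d_2,\dots)$, one has $m(c_i/d_i) = m(a_i/b_i)$ for $1 \le i \le \ell$. Combined with the separation hypothesis $p_n > p_{n+1}$ and the decreasing max-ordering of factorizations, the first $\ell$ entries of $\nu(r)$ are precisely those with max at least $p_n$, while each later entry uses only primes of $\alpha/\alpha_n$. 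Setting $R := \sum_{i > \ell} m(c_i/d_i)^t$, this decomposes $m_t(\nu(r))^t = m_t(\nu(s))^t + R$.

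Next, by Theorem~\ref{Rationals2} and Theorem~\ref{EOFPrim}, let $\bbb^*$ be an optimal primitive factorization of $\alpha_n$; the hypothesis gives $m_t(\bbb^*)^t < m_t(\nu(s))^t$ for all sufficiently large $t$. The goal is to build a factorization $\aaa^*$ of $\alpha$ by starting from $\bbb^*$ and appending the primes $p_{n+1},\dots,p_N$ via the same sequence of $\delta$- and $\epsilon$-operations used along the unique path from $\nu(s)$ to $\nu(r)$ in $\ppp_\alpha$. Because every attached prime satisfies $p_{n+i} < p_n$ while every entry carried along the $\bbb^*$-extension has max at least $p_n$, the $\delta$-conditions (such as $a_k p_{n+i} < b_k$) remain satisfied at each step. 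Each $\delta$-step contributes $0$ to $m_t^t$ and each $\epsilon$-step contributes $(\log p_{n+i})^t$, so the total extension contribution to $m_t(\aaa^*)^t$ matches $R$ exactly. Therefore
\begin{equation*}
m_t(\aaa^*)^t \;=\; m_t(\bbb^*)^t + R \;<\; m_t(\nu(s))^t + R \;=\; m_t(\nu(r))^t,
\end{equation*}
contradicting optimality of $\nu(r)$.

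The main technical obstacle is a rigorous justification that each $\delta$-operation transfers from the $\nu(s) \to \nu(r)$ path to the parallel extension of $\bbb^*$. I would argue this inductively along the extension path, using both the separation gap $p_n > p_{n+1}$ and the observation that entries newly created by an $\epsilon$-step are of the simple form $p_{n+i}^{\pm 1}$ and so provide fresh attachment room for later $\delta$-operations. Should any structural mismatch force an $\epsilon$-step on $\bbb^*$ where $\nu(s)$ admitted $\delta$, the excess contribution $(\log p_{n+i})^t$ would need to be absorbed by the strict improvement $m_t(\nu(s))^t - m_t(\bbb^*)^t$ for sufficiently large $t$, and verifying this balance is where the bulk of the technical work would reside.
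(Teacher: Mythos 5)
Your overall plan mirrors the paper's: pass to $\ppp_\alpha$ via the embedding from Theorem~\ref{PrimitiveComplete}\eqref{CPFTBiggest}, split the comparison at the separation index using Lemma~\ref{Subs}, pick an optimal factorization of the intermediate $\alpha_n$, and extend it down to compete with $\nu(r)$. However, the proof as written has a genuine gap at the crux, which you yourself flag but do not resolve. The claim that ``the $\delta$-conditions $a_k p_{n+i} < b_k$ remain satisfied at each step'' is simply not true: the inequality depends on the actual pair $(a_k,b_k)$, not merely on $\max\{a_k,b_k\}\ge p_n$. Concretely, an entry $11/31$ of $\bbb^*$ refuses the prime $7$ (since $77>31$) while the corresponding entry $1/31$ of $\nu(s)$ accepts it. Consequently the claimed identity $m_t(\aaa^*)^t = m_t(\bbb^*)^t + R$ is unjustified, and the ``absorption'' fallback you sketch in the last paragraph — trading extra $\epsilon$-terms $(\log p_{n+i})^t$ against $m_t(\nu(s))^t - m_t(\bbb^*)^t$ — is exactly the technical content of the theorem, not a remark. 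Proving it requires a coordinate-by-coordinate (lexicographic) argument and, crucially, the separation hypothesis to show that the first index of disagreement involves a prime $\ge p_n$ while the intruding terms are strictly smaller. Without that, the inequality is asserted rather than proved.

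The paper sidesteps the construction problem entirely by never leaving $\ppp_\alpha$: since every optimal factorization is primitive (Theorem~\ref{EOFPrim}) and $\ppp_\alpha$ contains all primitive factorizations (Theorem~\ref{PrimitiveComplete}\eqref{PrimsAllOver}), the competing optimal factorization $\nu_2(t)$ of $\alpha_{n-k}$ is already a vertex, and a descendant $t'$ at depth $n$ supplies the extension for free. The paper then invokes Lemma~\ref{Subs} twice — once along the $\nu(s)\to\nu(r)$ chain and once along the $t\to t'$ chain — and compares measures at the first index $j$ where $m(a_j/b_j)\ne m(c_j/d_j)$. The separation index is used precisely to settle the delicate sub-case $m(c_j/d_j)=0$, where the measure at position $j$ in $\nu_2(t')$ might be a newly introduced $\log p_i$ ($i>n-k$); the separation hypothesis $p_{n-k}>p_i$ is what guarantees $m(c'_j/d'_j)<m(a'_j/b'_j)$ there. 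Your proposal never addresses this boundary case, which is where the hypothesis actually earns its keep. If you rewrite the argument to (a) take $\bbb^*$ as a vertex of $\ppp_\alpha$ and extend inside the tree, and (b) replace the exact contribution-matching with a first-disagreement comparison that invokes the separation index in the boundary case, you will have recovered the paper's proof.
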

\begin{proof}
	We assume $\nu(r)$ is a factorization of $\alpha_n$ and that $k$ is a positive integer such that $\phi^k(r) = s$.
	Certainly $s$ is a factorization of $\alpha_{n-k}$, so by our assumptions, $n-k$ is a separation index for $\alpha$.
	
	Assume that $\nu(\phi^k(r)) \not\in \mathfrak O_\alpha$ for some $k\leq n$.
	By Theorem \ref{PrimitiveComplete}, there exists an injective edge-preserving homomorphism $\sigma:V(T) \to V(\ppp_\alpha)$.
	Let $\ppp_\alpha = (P,\nu_2)$, so by Theorem \ref{TreeHM}, we know that 
	\begin{equation} \label{Outsider}
		\nu_2(\phi^k(\sigma(r))) \not\in \mathfrak O_\alpha.
	\end{equation}
	Write
	\begin{equation*}
		\nu_2(\phi^k(\sigma(r))) = \left(\frac{a_1}{b_1},\frac{a_2}{b_2},\cdots,\frac{a_\ell}{b_\ell},1,1,\cdots \right)
	\end{equation*}
	with $a_\ell/b_\ell \ne 1$ and
	\begin{equation*}
		\nu_2(\sigma(r)) = \left(\frac{a'_1}{b'_1},\frac{a'_2}{b'_2},\cdots \right).
	\end{equation*}
	It follows from Lemma \ref{Subs} that
	\begin{equation} \label{FirstMeasure}
		m\left(\frac{a_i}{b_i}\right) = m\left( \frac{a'_i}{b'_i}\right) \mbox{ for all } 1\leq i\leq \ell.
	\end{equation}
	
	Applying Corollary \ref{OptimalPrimitive}, we get that $\ppp_\alpha$ is optimal, and therefore, there must exist $t\in V(P)$ such that $\nu_2(t)$ is an optimal factorization
	of $\alpha_{n-k}$.  Now we may write
	\begin{equation*}
		\nu_2(t) =  \left(\frac{c_1}{d_1},\frac{c_2}{d_2},\ldots\right).
	\end{equation*}
	We cannot have $m(a_i/b_i) =m( c_i/d_i)$ for all $1\leq i\leq \ell$ because this would contradict \eqref{Outsider}.
	Now suppose that $j$ is the smallest index satisfying $m(a_j/b_j) \ne m(c_j/d_j)$.   Therefore,
	\begin{equation} \label{SecondMeasure}
		m\left( \frac{a_i}{b_i}\right) = m\left( \frac{c_i}{d_i}\right)\mbox{ for all } i < j.
	\end{equation}
	Again since $\nu_2(t)$ is optimal while $\nu_2(\phi^k(\sigma(r)))$ is not, we must have that $m(c_j/d_j) < m(a_j/b_j)$.  In particular, this forces $j \leq \ell$.
	
	Now assume that $t'$ is a descendant of $t$ in $\ppp_\alpha$ such that $\nu_2(t')$ is a factorization of $\alpha_n$ and write
	\begin{equation*}
		\nu_2(t') = \left(\frac{c'_1}{d'_1},\frac{c'_2}{d'_2},\ldots\right).
	\end{equation*}
	We know that $c_{i}/d_{i} \ne 1$ for all $i < j$.  Again using Lemma \ref{Subs}, we obtain that
	\begin{equation*}
		m\left( \frac{c_i}{d_i}\right) = m\left( \frac{c'_i}{d'_i}\right)\mbox{ for all } 1\leq i < j.
	\end{equation*}
	Combining this with \eqref{FirstMeasure} and \eqref{SecondMeasure}, we find that
	\begin{equation*}
		m\left( \frac{a'_i}{b'_i}\right) = m\left( \frac{c'_i}{d'_i}\right)\mbox{ for all } 1\leq i < j.
	\end{equation*}
	We now claim that $m(c'_j/d'_j) < m(a'_j/b'_j)$.  To see this, we first observe that
	\begin{equation*}
		m\left( \frac{a'_j}{b'_j}\right) = m\left(\frac{a_j}{b_j}\right) > m\left(\frac{c_j}{d_j}\right).
	\end{equation*}
	If $m(c_j/d_j) \ne 0$ then it follows from Lemma \ref{Subs} that $m(c_j/d_j) = m(c'_j/d'_j)$ and our claim follows.  Otherwise,
	$m(c_i/d_i) = 0$ for all $i\geq j$ and $m(c'_j/d'_j)$ must either equal $0$ or equal $p_i$ for some $i > n-k$.  But since $n-k$ is a separation index for $\alpha$, 
	we know that $p_{n-k} > p_i$, and we conclude that $m(a_j/b_j) >  m(c'_j/d'_j)$ establishing our claim.  The result now follows.
\end{proof}

We shall now recognize Theorem \ref{OptimalOptimal} as a corollary to the following result.

\begin{thm} \label{OptimalSeparation}
	Suppose $\alpha$ is a rational number and $n$ is a separation index for $\alpha$.  If $\aaa$ is an optimal factorization of $\alpha_n$ then $\aaa\in \nu(V(\ooo_\alpha))$.
\end{thm}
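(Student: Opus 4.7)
My plan is to prove the theorem by induction on the separation index $n$. The base case $n = 0$ is immediate because the only factorization of $\alpha_0 = 1$ is $(1,1,\ldots)$, which is the content of the root vertex of $\ooo_\alpha$. For the inductive step with $n > 0$, I will first identify the largest separation index $k$ strictly less than $n$; such a $k$ exists because $0$ is always a separation index. The definition of separation index then forces $p_{k+1} = p_{k+2} = \cdots = p_n$, call this common prime $q$, and coprimality of the numerator and denominator of $\alpha$ forces the corresponding values of $\gamma$ to coincide. I will assume without loss of generality that this common value is $+1$.

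By Theorem \ref{EOFPrim}, $\aaa$ is primitive, and Theorem \ref{PrimitiveComplete}\eqref{PrimsAllOver} provides a vertex $r \in V(\ppp_\alpha)$ with content $\aaa$. Its level-$k$ ancestor has content $\bbb$ satisfying $\bbb < \aaa$, and Theorem \ref{OptimalAncestorsSep} guarantees that $\bbb$ is an optimal factorization of $\alpha_k$. The inductive hypothesis then supplies a vertex $s_k$ of $\ooo_\alpha$ with content $\bbb$. From $s_k$ I aim to construct a chain of vertices $s_k, s_{k+1}, \ldots, s_n$ in $\ooo_\alpha$ where $s_{j+1}$ is a child of $s_j$ and each content $\bbb^{(j)} := \nu(s_j)$ is a subfactorization of $\aaa$; in particular, $\bbb^{(n)} = \aaa$.

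The extension from $s_j$ to $s_{j+1}$ is straightforward when $\delta(\bbb^{(j)}) = \emptyset$: the unique $\ooo_\alpha$-child has content $(\bbb^{(j)}, q/1)$, and Lemma \ref{PrimitiveDelta} applied along any chain from $\bbb^{(j)}$ up to $\aaa$ forces this content to be $\leq \aaa$. The main obstacle arises when $\delta(\bbb^{(j)}) \neq \emptyset$, in which case the children of $s_j$ in $\ooo_\alpha$ are restricted to $\delta(\bbb^{(j)})$, and I must exhibit some absorb-step that still lies below $\aaa$.

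I plan to handle this obstacle by a swap argument driven by the optimality of $\aaa$. Suppose toward a contradiction that no $\bbb' \in \delta(\bbb^{(j)})$ satisfies $\bbb' \leq \aaa$. Then for every slot $s$ eligible at level $j$ (meaning $a_s q < b_s$), the slot of $\aaa$ indexed by $s$ must contain exactly the same copies of $q$ as that of $\bbb^{(j)}$. Combined with the fact that absorbing $q$ into a non-eligible slot of $\bbb^{(j)}$ violates either primitivity or the coprimality condition, this forces all $n - j$ additional copies of $q$ in $\aaa$ to occupy new singleton slots of the form $q/1$. Removing one such singleton and instead absorbing its $q$ into an eligible slot $s$ yields a primitive factorization $\aaa'$ of $\alpha_n$ whose measure vector equals that of $\aaa$ with exactly one coordinate $\log q$ deleted, so $m_t(\aaa') < m_t(\aaa)$ for every $t > 0$, contradicting the optimality of $\aaa$. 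The contradiction delivers the required $\bbb^{(j+1)} \in \delta(\bbb^{(j)})$ with $\bbb^{(j+1)} \leq \aaa$, and iterating for $j = k, k+1, \ldots, n-1$ completes the construction, yielding $\aaa \in \nu(V(\ooo_\alpha))$.
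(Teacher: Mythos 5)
Your proof is correct and follows essentially the same strategy as the paper: induct on separation indices, use Theorem \ref{OptimalAncestorsSep} to obtain an optimal ancestor $\bbb$ at the previous separation index, and then extend from $\bbb$ to $\aaa$ inside $\ooo_\alpha$ by showing, via an optimality swap, that $\epsilon$-steps occur only when forced. The paper formulates the optimality obstruction once globally (establishing $a_i p^{j_i+1} > b_i$ for all $i$ and then performing all $\delta$-absorptions before all $\epsilon$-appends), whereas you apply the equivalent swap locally at each level $j$; this is the same idea organized step-by-step rather than in one shot.
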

\begin{proof}
	Assume that
	\begin{equation*}
		0 = n_0 < n_1 < \cdots < n_{L-1} < n_L = N
	\end{equation*}
	is the complete list of separation indices for $\alpha$ so we know that $\aaa$ is a factorization of $\alpha_{n_\ell}$ for some $0\leq \ell \leq L$.
	We shall establish the theorem using induction on $\ell$.  If $\aaa$ is a factorization of $\alpha_0$ then $\aaa = (1,1,\ldots)$ and we know that $\aaa\in\nu(V(\ooo_\alpha))$.
	
	Now we assume that, for every optimal factorization $\bbb$ of $\alpha_{n_{\ell-1}}$, we have that $\bbb\in \nu(V(\ooo_\alpha))$.
	By our assumptions, we know that $\aaa$ is a optimal factorization of $\alpha_{n_\ell}$ for some $0\leq \ell \leq L$.  Therefore, Theorem \ref{EOFPrim} asserts that
	$A$ is primitive, so letting $\ppp_\alpha = (P,\nu')$, it follows from Theorem \ref{PrimitiveComplete} that there exists $r'\in V(P)$ such that $\nu'(r') = \aaa$.
	We may let $s'$ be an ancestor of $r'$ in $\ppp_\alpha$ such that $\nu'(s')$ is a factorization of $\alpha_{n_{\ell-1}}$.  Now set $\bbb = \nu'(s')$.
	
	Certainly we have that $\bbb < \aaa$, and since $\aaa$ is assumed to be optimal, Theorem \ref{OptimalAncestorsSep} implies that $\bbb$ is also optimal.
	Therefore, the inductive hypothesis yields a vertex $r\in V(\ooo_\alpha)$ such that $\nu(r) = \bbb$.  Let
	\begin{equation*}
		\bbb = \left(\frac{a_1}{b_1},\frac{a_2}{b_2},\cdots,\frac{a_k}{b_k}\right),\ \mathrm{where}\ \frac{a_k}{b_k}\ne 1.
	\end{equation*}
	Since $n_{\ell - 1}$ and $n_\ell$ are consecutive separation indices, there must exist exactly one prime $p$ dividing the numerator or denominator of $\alpha_{n_\ell}$ that
	divides neither the numerator nor denominator of $\alpha_{n_{\ell-1}}$.  This prime cannot divide both the numerator and denominator of $\alpha_{n_\ell}$, 
	so we shall assume without loss of generality that $p$ divides the numerator of $\alpha_{n_\ell}$.
	We now know there exist non-negative integers $j_1,j_2,\ldots,j_k$ and $z$ such that
	\begin{equation*}
		\aaa = \left(\frac{a_1p^{j_1}}{b_1},\frac{a_2p^{j_2}}{b_2},\cdots,\frac{a_kp^{j_k}}{b_k}, \underbrace{\frac{p}{1},\cdots, \frac{p}{1}}_{z\ \mathrm{times}}\right).
	\end{equation*}
	If $z = 0$ then clearly $\aaa\in\nu(V(\ooo_\alpha))$ as required.  If $z > 0$, we must have that
	\begin{equation} \label{AComp}
		a_ip^{j_i+1} > b_i\mbox{ for all } 1\leq i\leq k.
	\end{equation}
	since otherwise, $a_ip^{j_i+1} < b_i$ for some $i$ and then
	\begin{equation*}
		\left(\frac{a_1p^{j_1}}{b_1},\cdots,\frac{a_ip^{j_i+1}}{b_i},\cdots ,\frac{a_kp^{j_k}}{b_k}, \underbrace{\frac{p}{1},\cdots, \frac{p}{1}}_{z-1\ \mathrm{times}}\right)
	\end{equation*}
	is another primitive factorization of $\alpha_{n_\ell}$.  This would mean that $\aaa$ is not optimal, contradicting our assumption.  Letting
	$\gamma = j_1 + \cdots + j_k$, we certainly have that 
	\begin{equation*}
		\left(\frac{a_1p^{j_1}}{b_1},\frac{a_2p^{j_2}}{b_2},\cdots,\frac{a_kp^{j_k}}{b_k}\right)
	\end{equation*}
	is a factorization of $\alpha_{\gamma+n_{\ell-1}}$ and clearly belongs to $\nu(V(\ooo_\alpha))$.  It follows from \eqref{AComp} that $\aaa\in\nu(V(\ooo_\alpha))$ 
	as required.

\end{proof}

Theorem \ref{OptimalOptimal} now follows from the fact that $n$ is always a separation index for $\alpha_n$ and the fact that first $n+1$ generations of $\ooo_\alpha$
exactly form the tree data structure $\ooo_{\alpha_n}$.

\subsection{Measure Class Graphs} 

Finally, we establish our two main results regarding measure class graphs beginning with Theorem \ref{MCGUnique}.

\begin{proof}[Proof of Theorem \ref{MCGUnique}]
	Suppose that $\ttt_1 = (T_1,\nu_1)$, $\ttt_2 = (T_2,\nu_2)$, $\G_1 = (G_1,\mu_1)$ and $\G_2 = (G_2,\mu_2)$.  Also suppose that $\pi_1$ and $\pi_2$
	are the respective projection maps.  Assume that $g\in V(G_1)$ and that $r,s\in \pi_1^{-1}(g)$.  We observe that
	\begin{equation*}
		\mu_2(\pi_2(\sigma(r))) = f(\nu_2(\sigma(r))) = f(\nu_1(r)) = \mu_1(\pi_1(r)) = \mu_1(\pi_2(s)).
	\end{equation*}
	Then using the analogous equalities for $s$, we conclude that $\mu_2(\pi_2(\sigma(r))) = \mu_2(\pi_2(\sigma(s)))$.  Since $\mu_2$ is injective,
	we obtain that $\pi_2(\sigma(r)) = \pi_2(\sigma(s))$.  We may now define $\tau:V(G_1) \to V(G_2)$ by $\tau = \pi_2\sigma\pi_1^{-1}$.
	By our above work, $\tau$ is well-defined.
	
	It remains only to show that $\tau$ is bijective and edge-preserving.  If we take $g\in V(G_2)$ and let $r\in \pi_2^{-1}(g)$ then
	\begin{equation*}
		\tau((\pi_1\sigma^{-1})(r)) = g
	\end{equation*}
	meaning that $\tau$ is surjective.  Furthermore, $\pi_1\sigma^{-1}\pi_2^{-1}$ is a well-defined map from $V(G_2)$ to $V(G_1)$ by the same argument
	as above, which means that $\tau$ is injective.
	
	Finally, assume that $(g,h)\in E(G_1)$.  Since $\pi_1$ is surjective, we know that $g,h\in \pi_1(V(T_1))$.  Therefore, since $\pi_1$ is faithful, there exists
	$(r,s)\in E(T_1)$ such that $(\pi_1(r),\pi_1(s)) = (g,h)$.  We certainly have that $\pi_2\sigma$ satisfies property \eqref{EdgePreserving} in the definition
	of homomorphism, so it follows that
	\begin{equation*}
		(\pi_2(\sigma(r)),\pi_2(\sigma(s))) \in E(G_2).
	\end{equation*}
	We then obtain $(\tau(r),\tau(s))\in E(G_2)$.  If we assume that $(\tau(r),\tau(s))\in E(G_2)$ then $(g,h)\in E(G_1)$ follows by a similar argument.
\end{proof}

Finally, we prove that if $\alpha$ is square-free and $\ttt$ is a primitive factorization tree for $\alpha$ then the measure class graph for $\ttt$
is a binary tree.

\begin{proof}[Proof of Theorem \ref{BinTree}]
	Assume that $\ttt = (T,\nu)$ and $r,s\in V(T)$ are such that $\nu(r) \sim \nu(s)$.  We must first show that $\nu(\phi(r)) \sim \nu(\phi(s))$.
	
	To see this, since $\nu(r)$ and $\nu(s)$ are measure equivalent, we may assume that
	\begin{equation*}
		\nu(r) = \left(\frac{a_1}{b_1},\frac{a_2}{b_2},\cdots,\frac{a_\ell}{b_\ell}\right)\mbox{ and } \nu(s) = \left(\frac{c_1}{d_1},\frac{c_2}{d_2},\cdots,\frac{c_\ell}{d_\ell}\right)
	\end{equation*}
	are factorizations of $\alpha_n$ with $a_i/b_i \ne 1$ and $c_i/d_i\ne 1$ for all $i$.  In addition, we know that
	\begin{equation} \label{EqualMax}
		\max\{a_i,b_i\} = \max\{c_i,d_i\}\quad\mbox{for all } i
	\end{equation}
	Assume without loss of generality that $p_n$ divides the numerator of $\alpha = a/b$.  We know that $p_n$ must divide $a_i$ for some $i$.
	Now consider two cases.
	
	First assume that $p_n = \max\{a_i,b_i\}$.  Since $\nu(r)\sim \nu(s)$ we must have that $p_n = c_i$, and
	the definition of factorization implies that that $i=\ell$.  Since $p_n$ is larger than any prime dividing $b$, we have $b_\ell = d_\ell = 1$.  Hence,
	since $\alpha$ is square-free, we must have
	\begin{equation*}
		\nu(\phi(r)) = \left(\frac{a_1}{b_1},\frac{a_2}{b_2},\cdots,\frac{a_{\ell-1}}{b_{\ell-1}}\right)\mbox{ and } 
			\nu(\phi(s)) = \left(\frac{c_1}{d_1},\frac{c_2}{d_2},\cdots,\frac{c_{\ell-1}}{d_{\ell-1}}\right),
	\end{equation*}
	and it follows from \eqref{EqualMax} that these factorizations are measure equivalent.
	
	Now assume that $p_n\ne\max\{a_i,b_i\}$.  Since $\nu(r)$ is primitive, we must have $a_i < b_i$ so that $\max\{a_i/p,b_i\} = \max\{a_i,b_i\}$.
	We may also assume that $p_n$ divides $c_j$, and since $\nu(r)\sim\nu(s)$, we conclude that $c_j < d_j$ and $\max\{c_j/p,d_j\} = \max\{c_j,d_j\}$.
	Using \eqref{EqualMax} and the fact that $\alpha$ is square-free we obtain that $\nu(\phi(r)) \sim \nu(\phi(s))$ establishing our claim.

	It follows now that $\overline \ttt$ is a tree, but we must still show that it is binary.  To see this, assume that $\aaa$ is a primitive factorization
	of $\alpha_n$ given by
	\begin{equation*}
		\aaa = \left( \frac{a_1}{b_1},\frac{a_2}{b_2},\cdots,\frac{a_\ell}{b_\ell}\right)
	\end{equation*}
	where $a_\ell/b_\ell \ne 1$.  If $\bbb$ is such that $\aaa$ is a direct subfactorization of $\bbb$ then
	\begin{equation*}
		m(\bbb) \in \left\{ m(\aaa), \left( m\left(\frac{a_1}{b_1}\right),m\left(\frac{a_2}{b_2}\right),\cdots,m\left(\frac{a_\ell}{b_\ell}\right),\log p_{n+1} \right)\right\}.
	\end{equation*}
	In particular, $m(\bbb)$ depends only on $m(\aaa)$ and $n$.  We now conclude that $\overline \ttt$ is binary as required.
\end{proof}

\section{Examples} \label{Examples}

Our original goal for developing factorization trees was to determine optimal factorizations of rational numbers.  Using the techniques presented in \cite{SamuelsMetric},
we can search the leaf vertices of $\ooo_\alpha$ to find all optimal factorizations of $\alpha$.
In this section, we provide several examples of this strategy.

\begin{ex} \label{30/7Again}
With the assistance of \cite{JankSamuels}, the work of \cite{SamuelsParametrized} establishes that $(5/7,3,2)$ is an optimal factorization of $\alpha = 30/7$.
By using the canonical optimal factorization tree for $30/7$, we obtain a new proof.  Indeed, $\ooo_{30/7}$ is as follows.

\bigskip
\Tree[.\fbox{$1$} 
		[.\fbox{$\frac{1}{7}$} 
			[.\fbox{$\frac{5}{7}$} 
				[.\fbox{$\frac{5}{7}\cdot \frac{3}{1}$}
					\fbox{$\frac{5}{7}\cdot\frac{3}{1}\cdot \frac{2}{1}$}
				]
			]
		]
	]
\bigskip

As there is only one leaf node of $\ooo_{30/7}$, and we know that $\ooo_{30/7}$ is optimal, the factorization $\frac{30}{7} = \frac{5}{7}\cdot\frac{3}{1}\cdot \frac{2}{1}$
must be the only optimal factorization of $30/7$.
\end{ex}

\begin{ex}
Now consider
\begin{equation*}
	\alpha = \frac{851}{858} = \frac{37\cdot 23}{13\cdot 11\cdot 3\cdot 2}
\end{equation*}
Once again, $\alpha$ is square-free so we need only use $\ooo_\alpha$ to locate all optimal factorization of $\alpha$.

\bigskip
\Tree[.\fbox{$1$} 
		[.\fbox{$\frac{37}{1}$} 
			[.\fbox{$\frac{37}{1}\cdot\frac{23}{1}$} 
				[.\fbox{$\frac{37}{13}\cdot \frac{23}{1}$}
					[.\fbox{$\frac{37}{13}\cdot\frac{23}{11}$}
						[.\fbox{$\frac{37}{13}\cdot\frac{23}{11}\cdot \frac{1}{3}$}
							\fbox{$\frac{37}{26}\cdot\frac{23}{11}\cdot \frac{1}{3}$}
							\fbox{$\frac{37}{13}\cdot\frac{23}{22}\cdot \frac{1}{3}$}
						]
					]
				]
				[.\fbox{$\frac{37}{1}\cdot \frac{23}{13}$}
					[.\fbox{$\frac{37}{11}\cdot \frac{23}{13}$}
						[.\fbox{$\frac{37}{33}\cdot \frac{23}{13}$}
							\fbox{$\frac{37}{33}\cdot \frac{23}{13}\cdot \frac{1}{2}$}
						]
					]
				]
			]
		]
	]
\bigskip

Among the leaf nodes, all three factorizations have equal Mahler measures in the first two entries, while the factorization
\begin{equation} \label{851/858}
	\frac{851}{858} = \frac{37}{33}\cdot \frac{23}{13}\cdot \frac{1}{2}
\end{equation}
has the smallest Mahler measure in the third entry.  No other factorization appearing among leaf nodes has measure equal to that of
\eqref{851/858}, so it follows that \eqref{851/858} is the only optimal factorization of $851/858$.  If we were interested only in the measure of an optimal
factorization of $\alpha$, we could instead look the measure class graph for $\ooo_\alpha$ given as follows.

\bigskip
\Tree[.\fbox{$(0)$}
		[.\fbox{$(\log 37)$}
			[.\fbox{$(\log 37,\log 23)$}
				[.\fbox{$(\log 37,\log 23)$}
					[.\fbox{$(\log 37,\log 23)$}
						[.\fbox{$(\log 37,\log 23,\log 3)$}
							\fbox{$(\log 37,\log 23,\log 3)$}
						]
						[.\fbox{$(\log 37,\log 23)$}
							\fbox{$(\log 37,\log 23,\log 2)$}
						]
					]
				]
			]
		]
	]
\bigskip

Now we easily see that any optimal factorization of $\alpha$ has measure given by $(\log 37,\log 23,\log 2)$.

\end{ex}

\begin{ex}\label{Complicated} As our final example, we consider
\begin{equation*}
	\alpha = \frac{316,889}{549,010} = \frac{131\cdot 59\cdot 41}{31\cdot 23\cdot 11\cdot 7\cdot 5\cdot 2}
\end{equation*}
The first eight generations of $\ooo_\alpha$ are given as follows.

\bigskip
\Tree[.\fbox{$1$} 
		[.\fbox{$\frac{131}{1}$} 
			[.\fbox{$\frac{131}{1}\frac{59}{1}$} 
				[.\fbox{$\frac{131}{1}\frac{59}{1} \frac{41}{1}$} 
					[.\fbox{$\frac{131}{31}\frac{59}{1}\frac{41}{1}$} 
						[.\fbox{$\frac{131}{31}\frac{59}{23}\frac{41}{1}$} 
							[.\fbox{$\frac{131}{31} \frac{59}{23}  \frac{41}{11}$} 
								[.\fbox{$\frac{131}{31} \frac{59}{23}  \frac{41}{11}  \frac{1}{7}$} 
								]
							]
						]
						[.\fbox{$\frac{131}{31} \frac{59}{1}  \frac{41}{23}$} 
							[.\fbox{$\frac{131}{31} \frac{59}{11}  \frac{41}{23}$} 
								[.\fbox{$\frac{131}{31} \frac{59}{11}  \frac{41}{23}  \frac{1}{7}$} 
								]
							]
						]
					]
					[.\fbox{$\frac{131}{1} \frac{59}{31}  \frac{41}{1}$} 
						[.\fbox{$\frac{131}{23} \frac{59}{31}  \frac{41}{1}$} 
							[.\fbox{$\frac{131}{23} \frac{59}{31}  \frac{41}{11}$} 
								[.\fbox{$\frac{131}{23} \frac{59}{31}  \frac{41}{11}  \frac{1}{7}$} 
								]
							]
						]
						[.\fbox{$\frac{131}{1} \frac{59}{31}  \frac{41}{23}$} 
							[.\fbox{$\frac{131}{11} \frac{59}{31}  \frac{41}{23}$} 
								[.\fbox{$\frac{131}{77} \frac{59}{31}  \frac{41}{23}$} 
								]
							]
						]
					]
					[.\fbox{$\frac{131}{1} \frac{59}{1}  \frac{41}{31}$} 
						[.\fbox{$\frac{131}{23} \frac{59}{1}  \frac{41}{31}$} 
							[.\fbox{$\frac{131}{23} \frac{59}{11}  \frac{41}{31}$} 
								[.\fbox{$\frac{131}{23} \frac{59}{11}  \frac{41}{31}  \frac{1}{7}$} 
								]
							]
						]
						[.\fbox{$\frac{131}{1} \frac{59}{23}  \frac{41}{31}$} 
							[.\fbox{$\frac{131}{11} \frac{59}{23}  \frac{41}{31}$} 
								[.\fbox{$\frac{131}{77} \frac{59}{23}  \frac{41}{31}$} 
								]
							]
						]
					]
				]
			]
		]
	]
	
\bigskip
	
In view of Theorem \ref{OptimalAncestorsSep}, any optimal factorization of $\alpha$ must be a descendant of the vertex containing either $(\frac{131}{77}, \frac{59}{31},  \frac{41}{23})$
or $(\frac{131}{77}, \frac{59}{23},  \frac{41}{31})$.  Hence, we consider only the portion of $\ooo_\alpha$ having descendants of these vertices.

\bigskip
\begin{enumerate}
\item[] 
\Tree[.\fbox{$\frac{131}{77} \frac{59}{31} \frac{41}{23}$} 
		[.\fbox{$\frac{131}{77} \frac{59}{31}  \frac{41}{23}\frac{1}{5}$} 
			[.\fbox{$\frac{131}{77} \frac{59}{31}  \frac{41}{23} \frac{1}{5}\frac{1}{2}$} 
			]
		]
	]
\Tree[.\fbox{$\frac{131}{77} \frac{59}{23}  \frac{41}{31}$} 
		[.\fbox{$\frac{131}{77} \frac{59}{23}  \frac{41}{31}\frac{1}{5}$} 
			[.\fbox{$\frac{131}{77} \frac{59}{46}  \frac{41}{31} \frac{1}{5}$} 
			]
		]
	]
\end{enumerate}
\bigskip
Therefore, we conclude that 
\begin{equation*}
	\alpha =  \frac{316,889}{549,010} = \frac{131}{77}\cdot \frac{59}{46} \cdot \frac{41}{31}\cdot \frac{1}{5} 
\end{equation*}
is the only optimal factorization of $\alpha$.  

\end{ex}


\begin{thebibliography}{}
\bibitem{BDM} P. Borwein, E. Dobrowolski and M.J. Mossinghoff,
  {\it Lehmer's problem for polynomials with odd coefficients}, Ann. of Math. (2) {\bf 166} (2007),  no. 2, 347--366.
\bibitem{Dobrowolski} E. Dobrowolski, {\it  On a question of Lehmer and the number of irreducible factors of a polynomial},
 Acta Arith.  {\bf 34} (1979),  no. 4, 391--401.
\bibitem{DubSmyth} A. Dubickas and C.J. Smyth,  {\it On metric heights}, Period. Math. Hungar. {\bf 46} (2) (2003), 135--155.
\bibitem{DubSmyth2}  A. Dubickas and C.J. Smyth, {\it On the metric Mahler measure}, J. Number Theory {\bf 86} (2001), 368--387.
\bibitem{JankSamuels} J. Jankauskas and C.L. Samuels, {\it The $t$-metric Mahler measures of surds and rational numbers}, Acta Math. Hungar. {\bf 134} (2012), no. 4, 481--498.
\bibitem{Lehmer} D.H. Lehmer, {\it Factorization of certain cyclotomic functions}, Ann. of Math. {\bf 34} (1933), 461--479.
\bibitem{MossWeb} M.J. Mossinghoff, website, {\it Lehmer's Problem}, {\tt http://www.cecm.sfu.ca/~mjm/Lehmer}.
\bibitem{Northcott} D.G. Northcott, {\it An inequality on the theory of arithmetic on algebraic varieties},
	Proc. Cambridge Philos. Soc., {\bf 45} (1949), 502--509.
\bibitem{SamuelsInfimum} C.L. Samuels, {\it The infimum in the metric Mahler measure}, Canad. Math. Bull. {\bf 54} (2011), 739--747.
\bibitem{SamuelsCollection} C.L. Samuels, {\it A collection of metric Mahler measures}, J. Ramanujan Math. Soc. {\bf 25} (2010), no. 4, 433--456.
\bibitem{SamuelsParametrized} C.L. Samuels, {\it The parametrized family of metric Mahler measures}, J. Number Theory {\bf 131} (2011), no. 6, 1070--1088
\bibitem{SamuelsMetric} C.L. Samuels, {\it Metric heights on an Abelian group}, Rocky Mountain J. Math., to appear.
\bibitem{Schinzel} A. Schinzel, {\it On the product of the conjugates outside the unit circle of an algebraic number},
 Acta Arith. {\bf 24} (1973), 385--399. Addendum, ibid.  {\bf 26} (1975), no. 3, 329--331.
\bibitem{Smyth} C.J. Smyth, {\it  On the product of the conjugates outside the unit circle of an algebraic integer}, Bull. London Math. Soc.  {\bf 3}  (1971), 169--175.
\bibitem{Voutier} P. Voutier, {\it An effective lower bound for the height of algebraic numbers}, Acta Arith. {\bf 74} (1996), 81--95.


\end{thebibliography}
\end{document}